\newtheorem{theorem}{Theorem}[section]
\newtheorem{lemma}[theorem]{Lemma}
\newtheorem{proposition}[theorem]{Proposition}
\newtheorem{corollary}[theorem]{Corollary}
\theoremstyle{definition}
\newtheorem{example}[theorem]{Example}
\newtheorem{remark}[theorem]{Remark}
\newcommand{\thmref}[1]{Theorem~\ref{#1}}
\newcommand{\lemref}[1]{Lemma~\ref{#1}}
\newcommand{\eqnref}[1]{~{\textrm(\ref{#1})}}
\numberwithin{equation}{section}
\newcounter{flipflop}
\tikzset{
on each straight segment/.style={
    decorate,
    decoration={
        show path construction,
        moveto code={},
        lineto code={
            \path [#1]
            (\tikzinputsegmentfirst) -- (\tikzinputsegmentlast);
        },
        curveto code={
            \path  (\tikzinputsegmentfirst)
            .. controls
            (\tikzinputsegmentsupporta) and (\tikzinputsegmentsupportb)
            ..
            (\tikzinputsegmentlast);
        },
        closepath code={
            \path 
            (\tikzinputsegmentfirst) -- (\tikzinputsegmentlast);
        },
    },
},
set flipflop/.code=\setcounter{flipflop}{#1},
on each other straight segment/.style={
    decorate,
    decoration={
        show path construction,
        moveto code={},
        lineto code={\stepcounter{flipflop}
            \path \ifodd\value{flipflop} [#1]\fi
            (\tikzinputsegmentfirst) -- (\tikzinputsegmentlast);
        },
        curveto code={
            \path  (\tikzinputsegmentfirst)
            .. controls
            (\tikzinputsegmentsupporta) and (\tikzinputsegmentsupportb)
            ..
            (\tikzinputsegmentlast);
        },
        closepath code={
            \path 
            (\tikzinputsegmentfirst) -- (\tikzinputsegmentlast);
        },
    },
},
mid arrow/.style={postaction={decorate,decoration={
            markings,
            mark=at position .5 with {\arrow[#1]{stealth}}
}}},
}
\begin{document}

\title[$z$-classes of Palindromic Automorphisms]{On the $z$-classes of Palindromic automorphisms of Free Groups}

\author[K. Gongopadhyay]{Krishnendu Gongopadhyay}

\author[L. Kundu]{LOKENATH KUNDU}

\author[S. V. Singh]{Shashank Vikram Singh}

\address{
Indian Institute of Science Education and Research (IISER) Mohali,
		Knowledge City, Sector 81, SAS Nagar, Punjab 140306, India}
	\email{krishnendu@iisermohali.ac.in}

\address{Indian Institute of Science Education and Research (IISER) Mohali,
		Knowledge City, Sector 81, SAS Nagar, Punjab 140306, India.}
	\email{lokenath@iisermohali.ac.in}

 \address{
Indian Institute of Science Education and Research (IISER) Mohali,
		Knowledge City, Sector 81, SAS Nagar, Punjab 140306, India}
	\email{shashank@iisermohali.ac.in}

\makeatletter
\@namedef{subjclassname@2020}{\textup{2020} Mathematics Subject Classification}
\makeatother

\subjclass[2020]{Primary 20F28, Secondary 20E36, 20E05, 20H05}

\keywords{$z$-classes, palindromic automorphism group, reducible palindromic automorphisms.}

\begin{abstract}
  The palindromic automorphism group is a subgroup of the automorphism group $Aut(F_n).$ We establish a necessary and sufficient condition for a matrix in $GL_n(\mathbb{Z})$ representing a palindromic automorphism of $F_n.$ We prove that the number of the $z$-classes in $\Pi A(F_n)$ is infinite. We further classify the conjugacy classes of the reducible palindromic automorphisms.
\end{abstract}

\maketitle           



\section{Introduction}
Given any group $G$, two elements $x$ and $y$ in $G$ are called $z$-equivalent or belong to the same \emph{$z$-classes}, denoted $x \sim_z y$, if their centralizers $Z_G(x)$ and $Z_G(y)$ are conjugate in $G$. The $z$-classes give a disjoint partition of the group $G$. The conjugacy classes of centralizers appeared at many places in the literature, e.g., classical work of Borel-de Siebenthal \cite{Bo} and Steinberg \cite{St}. In general, a group may be infinite and it may have infinitely many conjugacy classes, but the number of $z$-classes is often finite, see \cite{GS},\cite{St}. It motivates Kulkarni to view the $z$-classes as an algebraic ingredient to classify `dynamical types' in a geometry. In \cite{Kulkarni1}, Kulkarni has explained the motivation in detail and has proposed a systematic study of the conjugacy classes of the centralizers in a group. Kulkarni has also initiated the terminology `$z$-class' in \cite{Kulkarni1}. For subsequent work to understand the $z$-classes in groups, see \cite{Bh, Go, Kulkarni, Ku3}. For further information on $z$-classes, we refer to the survey \cite{Bh1}. 

Recently, Parsad \cite{parsad} has proved that if two elements in the mapping class group of an orientable genus $g$ surface are 
$z$-equivalent, then their Nielsen-Thurston type is the same. Due to the analogy between several aspects of the mapping class groups and the free group (outer) automorphisms, cf. \cite{bv}, one can ask for the $z$-classes for $Aut(F_n)$. In this paper, we ask this question for a special subgroup of $Aut(F_n)$, viz., the subgroup of all palindromic automorphisms, which we recall below. 

 \medskip Let $F_n$ be a free group with a generating set $S:=\lbrace a_1,a_2,\ldots,a_n \rbrace$. Let $w=a_{i1}a_{i2}\cdots a_{ik}\cdots a_{in}$ be a reduced word in $F_n$ where $a_{ik}$ is the letter at $k$-th position of $w$, and $a_{i1},~a_{i2},~\cdots ,a_{ik}~\in ~S$. Define $\bar{w}=a_{in}\cdots a_{ik}\cdots a_{i2}a_{i1}$ to denote the word obtained by reversing the order of the letters in $w$. A word $w$ is said to be a palindromic word if $w=\bar{w}$. An automorphism that maps each generator of $F_n$ to a palindromic word of $F_n$ is called a palindromic automorphism. The collection of palindromic automorphisms forms an infinite index subgroup of $Aut(F_n)$, denoted by $\Pi A(F_n)$. Collins introduced the concept of palindromic automorphisms in \cite{Collins}. He has also proved a finite presentation of the palindromic automorphism group. Geometric and algebraic properties of the palindromic automorphism group were further investigated in \cite{KG, Glover, center, PR}. We recall from \cite{Glover} that the palindromic automorphism group $\Pi A(F_n)$ is generated by the following automorphisms:
\begin{itemize}
    \item Automorphisms $A_{ij}=(a_i||a_j),~i\neq j,$ which maps $a_i\mapsto a_ja_ia_j$ and fix all other generators $a_k,~k\neq i$.
\item Automorphisms $\sigma_{a_i}$ which maps $a_i\mapsto a_i^{-1}$ and fix all other generators $a_k,~k\neq i$.
\item Automorphisms $\tau_{\rho}$ corresponding to elements of the symmetric group $S_n$ which permute the 
    $a_1,a_2,\dots,a_n$
 among themselves. That is, $\tau_{\rho}(a_i)=a_{\rho(i)}$ for $\rho \in S_n$.
\end{itemize}

\medskip 
The center of the palindromic automorphism group $\Pi A(F_n) ~\text{for }n\geq 3$ has been computed in \cite{KG}, \cite{center}. Following these works, it is a natural question to ask for the classification of the $z$-classes in $\Pi A(F_n)$ for $n\geq 3$.  The classification of the $z$-classes would provide a partition of $\Pi A(F_n)$, and that may provide internal ingredients towards understanding the dynamical types of the palindromic automorphisms.

First note that there is a well-known correspondence $\psi$ between $Aut(F_n)$ and the group of integer matrices $GL_n(\mathbb Z)$, cf. \cite[Chapter 3]{Bogopolski}. The map $\psi: Aut(F_n)\rightarrow  GL_n(\mathbb{Z})$ is the \emph{abelianization} map defined by the following rule: 

\medskip
For any $\alpha\in Aut(F_n)$ the $(i,j)$-th entry of the matrix $\psi(\alpha)$ is equal to the sum of exponents of the letter $a_j$ in $\alpha(a_i)$. Then the map $\psi$ is an epimorphism. The matrix $\psi(\alpha)$ is called the abelianization matrix corresponding to $\alpha$ and denoted by $\widehat{\alpha}$.  

\medskip 
Let $\widehat{ GL_n}(\mathbb Z)$ denote the subgroup of $GL_n(\mathbb Z)$ which consists of all matrices $\widehat A$ such that each row and each column will have exactly one odd entry. It is worth noting that $\widehat{GL}_n(\mathbb{Z})$ is the preimage of the symmetric group $S_n$, consisting of all permutation matrices, under the modulo $2$  reduction map $\pi: GL_n(\mathbb{Z}) \to GL_n(\mathbb{Z}/\langle 2 \rangle)$. We prove the following. 
\begin{theorem}\label{ncase}
The image of $\Pi A(F_n)$ under $\psi$ is $\widehat{ GL_n(\mathbb Z)}$. 
\end{theorem}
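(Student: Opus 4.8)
The plan is to prove the two inclusions separately. For $\psi(\Pi A(F_n)) \subseteq \widehat{GL_n}(\mathbb Z)$, I would first record the images of the three families of generators under $\psi$: writing $E_{ij}$ for the matrix with a single $1$ in position $(i,j)$, one checks directly that $\psi(A_{ij}) = I + 2E_{ij}$, that $\psi(\sigma_{a_i})$ is the diagonal matrix with $-1$ in the $(i,i)$ slot and $1$ elsewhere, and that $\psi(\tau_\rho)$ is the permutation matrix $P_\rho$. Each of these has exactly one odd entry in every row, so it lies in $\widehat{GL_n}(\mathbb Z)$; since $\widehat{GL_n}(\mathbb Z)$ is a subgroup and these automorphisms generate $\Pi A(F_n)$, the inclusion follows. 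It is convenient to reformulate the defining condition: a matrix $\hat A \in GL_n(\mathbb Z)$ has exactly $n-1$ even entries in each row if and only if its reduction $\hat A \bmod 2$ has exactly one $1$ per row; as $\det \hat A = \pm 1$ is odd, this reduction is invertible over $\mathbb F_2$ and hence a permutation matrix. Thus $\widehat{GL_n}(\mathbb Z)$ is exactly the preimage of the group of permutation matrices under reduction modulo $2$.

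For the reverse inclusion I would argue by an elementary reduction, interpreting left (resp. right) multiplication by the generator images as row (resp. column) operations: multiplication by $(I + 2E_{ij})^{\pm k} = I \pm 2kE_{ij}$ adds an even multiple of one row/column to another, $\psi(\sigma_{a_i})$ negates a row/column, and $\psi(\tau_\rho)$ permutes rows/columns. Given $\hat A \in \widehat{GL_n}(\mathbb Z)$, its reduction mod $2$ is a permutation matrix $P_\rho$; left-multiplying by $\psi(\tau_{\rho})^{-1}$ (a generator image) I may assume $\hat A \equiv I \pmod 2$, so every diagonal entry is odd and every off-diagonal entry even. I would then clear the first column to $e_1 = (1,0,\dots,0)^{\mathsf T}$ using only even row operations and sign changes, clear the first row to $e_1^{\mathsf T}$ using even column operations (possible since column $1$ is already $e_1$ and each first-row entry is even, so one subtracts the appropriate even multiple of column $1$), and induct on $n$ on the remaining $(n-1)\times(n-1)$ block, which still reduces to $I$ modulo $2$. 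Since each step is multiplication by a generator image and the process terminates at $I$, this exhibits $\hat A \in \psi(\Pi A(F_n))$.

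The main obstacle is the first-column reduction under the parity constraint that only even multiples of rows may be added. Here I would run a constrained Euclidean algorithm between the pivot (position $1$, which stays odd throughout because only even quantities are added to it) and one even entry at a time: the key invariant is that the operations $x \mapsto x + 2ky$ and $y \mapsto y + 2kx$ preserve $\gcd(x,y)$, and a descent on $\max(|x|,|y|)$ — which never stalls, since an odd and an even number can never agree in absolute value, so the larger can always be reduced modulo twice the smaller — drives the even entry to $0$ while leaving $\gcd(x,y)$, an odd number, in the pivot. Processing the entries $2,\dots,n$ in turn accumulates their gcd with the pivot, and because each column of $\hat A \in GL_n(\mathbb Z)$ is primitive this accumulated gcd is $1$; a final sign change makes the pivot $+1$. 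Verifying that this descent genuinely terminates, and that zeroing one entry never disturbs the already-cleared ones, is the only delicate point, and it is exactly where the hypothesis that every off-diagonal entry is even (equivalently, that $\hat A \bmod 2$ is a permutation matrix) is used.
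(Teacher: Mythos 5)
Your proof is correct, and it takes a genuinely different route from the paper's in both directions. For the containment $\psi(\Pi A(F_n)) \subseteq \widehat{GL_n}(\mathbb Z)$, the paper argues directly from the definition of a palindrome: if some row of $\psi(A)$ had two odd entries, the central-letter symmetry of the palindromic word $A(a_i)$ would force every letter except (at most) the central one to have even exponent sum, a contradiction. That argument needs no generating set, whereas your version (check the three families of generators, then observe that $\widehat{GL_n}(\mathbb Z)$ is the preimage of the permutation matrices under reduction mod $2$, hence a subgroup) is shorter but leans on the Glover--Jensen generation theorem quoted in the introduction. The more substantial difference is in the reverse inclusion. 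Given a matrix with exactly one odd entry per row, the paper writes down an explicit endomorphism sending each $a_i$ to a palindrome with the odd-exponent letter at the center and the even-exponent letters folded symmetrically around it; this visibly has the right abelianization, but the paper asserts rather than verifies that this endomorphism is an \emph{automorphism}, and that is not automatic, since a free-group endomorphism with invertible abelianization need not be bijective. For instance, $a_1 \mapsto a_2 a_1^3 a_2$, $a_2 \mapsto a_1^2 a_2^3 a_1^2$, $a_3 \mapsto a_3$ is exactly this construction for the matrix with upper block $\begin{bsmallmatrix} 3 & 2 \\ 4 & 3 \end{bsmallmatrix}$, yet no cancellation ever occurs in products of these two palindromes and their inverses, so they generate a proper (Nielsen-reduced) subgroup of $\langle a_1,a_2\rangle$ and the map is not surjective. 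Your constrained elimination avoids this issue entirely: every elementary step is the image of an honest palindromic automorphism, so the resulting factorization lands in $\psi(\Pi A(F_n))$ with nothing left to check, and the delicate points you flag are genuinely fine (the odd/even parity of pivot and target guarantees strict descent in the Euclidean loop, and primitivity of the columns of an invertible integer matrix forces the final pivot to be $\pm 1$). In short, the paper's forward argument is the more self-contained, while your factorization argument is the more airtight proof of surjectivity; as a byproduct it also shows that $\widehat{GL_n}(\mathbb Z)$ is generated by the matrices $I+2E_{ij}$, the sign-change matrices, and the permutation matrices, which the paper's construction does not give.
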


 For \( n = 3 \), we employ the correspondence between the groups \( \Pi A(F_3) \) and \( \widehat{GL_3(\mathbb{Z})} \) to identify the images of the generators of \( \Pi A(F_3) \) within \( \widehat{GL_3(\mathbb{Z})} \). Subsequently, we determine the centralizers of these images and thereby deduce their \( z \)-classes. By constructing explicit examples, we establish that the number of \( z \)-classes in \( \widehat{GL_3(\mathbb{Z})} \) is infinite, and consequently, the number of \( z \)-classes in \( \Pi A(F_3) \) is also infinite (see Theorem~\ref{P3}). The analysis for \( n = 3 \) yields the following theorem.

\begin{theorem}\label{zclassn}
There are infinitely many $z$-classes in the group $\Pi A(F_n)$ for $n\geq 3$. 
\end{theorem}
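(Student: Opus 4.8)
The plan is to transport the problem to the matrix group via the epimorphism $\psi$ and to count $z$-classes there. By \thmref{ncase}, $\psi$ maps $\Pi A(F_n)$ onto $\widehat{GL_n}(\mathbb{Z})$. First I would observe that $\widehat{GL_n}(\mathbb{Z})$ is a \emph{finite-index} subgroup of $GL_n(\mathbb{Z})$: the defining condition that each row has exactly one odd entry says precisely that the reduction of the matrix modulo $2$ is a permutation matrix, so $\widehat{GL_n}(\mathbb{Z})$ is the preimage of the symmetric group $S_n \le GL_n(\mathbb{F}_2)$ under reduction $\bmod\,2$; in particular it contains the principal congruence subgroup $\Gamma(2) = \ker\bigl(GL_n(\mathbb{Z}) \to GL_n(\mathbb{F}_2)\bigr)$. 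It therefore suffices to produce infinitely many $z$-classes inside $\widehat{GL_n}(\mathbb{Z})$ and to pull them back.

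The invariant I would use is a number field attached to a regular semisimple element. For $A \in \widehat{GL_n}(\mathbb{Z})$ with irreducible characteristic polynomial the commutant $\mathbb{Q}[A]$ is a degree-$n$ field $K_A$, and because $A$ is itself a unit generating $K_A$, the centralizer already generates this field: $\mathbb{Q}\bigl[Z_{\widehat{GL_n}(\mathbb{Z})}(A)\bigr] = K_A$. Since conjugate subgroups generate isomorphic $\mathbb{Q}$-algebras, the isomorphism type of $K_A$ is a $z$-class invariant. To realize infinitely many values I would start from units of orders in infinitely many pairwise non-isomorphic degree-$n$ number fields, which act on the ring of integers to give hyperbolic matrices in $GL_n(\mathbb{Z})$, and then replace each by a power lying in $\Gamma(2) \subseteq \widehat{GL_n}(\mathbb{Z})$ (for $n$ prime the power still generates the same field, and in general one chooses the fields to avoid the intermediate subfields). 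This yields infinitely many $z$-classes in $\widehat{GL_n}(\mathbb{Z})$.

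To pull these back, choose for each such $A$ a preimage $x_A \in \psi^{-1}(A)$. Because $\psi$ is a homomorphism, $\psi\bigl(Z_{\Pi A(F_n)}(x_A)\bigr) \subseteq Z_{\widehat{GL_n}(\mathbb{Z})}(A)$, while this image contains $A = \psi(x_A)$ and hence generates $K_A$; thus $\mathbb{Q}\bigl[\psi(Z(x_A))\bigr] = K_A$. If $x_A \sim_z x_{A'}$ in $\Pi A(F_n)$, say $Z(x_A) = g\,Z(x_{A'})\,g^{-1}$, applying $\psi$ gives $\psi(Z(x_A)) = \psi(g)\,\psi(Z(x_{A'}))\,\psi(g)^{-1}$, whence $K_A \cong K_{A'}$. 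Distinct fields therefore force distinct $z$-classes in $\Pi A(F_n)$, and the infinitely many fields produced above give infinitely many $z$-classes, proving the statement.

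The main obstacle is the pull-back step: in general $\psi(Z(x))$ is only a subgroup of $Z(\psi(x))$, so a $z$-class invariant downstairs need not be visible upstairs. The role of regular semisimplicity is exactly to close this gap, since it forces the $\mathbb{Q}$-algebra generated by the centralizer (the image upstairs or the group downstairs) to collapse to the single field $K_A$, making $K_A$ a robust invariant on both sides. The remaining care is number-theoretic: guaranteeing that the chosen hyperbolic elements can be taken inside $\widehat{GL_n}(\mathbb{Z})$ without changing the associated field after passing to a power. Everything else is routine bookkeeping with the correspondence $\psi$.
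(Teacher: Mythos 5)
Your proposal is correct in substance, but it takes a genuinely different route from the paper. The paper also transports the problem through $\psi$ (via \thmref{ncase}), but then works entirely with explicit matrices: in Theorem~\ref{P3} it exhibits the upper-triangular families $\hat{A}_{n,l}$ and $\hat{B}_m$, computes their centralizers by hand, and distinguishes them by elementary means (eigenvalues $\pm 1$ versus $2\pm\sqrt{3}$, plus a linear system ruling out conjugating matrices), and finally handles general $n$ by forming block sums with $I_{n-3}$, reducing everything to the $3\times 3$ case. Your argument instead uses regular semisimple elements of $\Gamma(2)\subseteq \widehat{GL_n}(\mathbb{Z})$ arising from units of number fields, with the isomorphism class of the field $K_A=\mathbb{Q}[A]$ as the $z$-class invariant. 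What your approach buys is significant: it is uniform in $n$ (no reduction to $n=3$), and, more importantly, it honestly addresses the descent step that the paper passes over. The paper concludes ``the number of $z$-classes of $\widehat{GL_3(\mathbb{Z})}$ is infinite, hence the number of $z$-classes of $\Pi A(F_3)$ is infinite'' without justification, and this implication is not formal, precisely because $\psi(Z_{\Pi A(F_n)}(x))$ may be a proper subgroup of $Z_{\widehat{GL_n}(\mathbb{Z})}(\psi(x))$; non-conjugacy of centralizers in the image does not automatically yield non-conjugacy of centralizers in the source. Your field invariant closes this gap exactly as you say: any subgroup of the centralizer that contains $A$ already generates $K_A$ as a $\mathbb{Q}$-algebra, so the invariant survives the passage from $Z_{\widehat{GL_n}(\mathbb{Z})}(A)$ to $\psi(Z(x_A))$, and conjugation by $\psi(g)$ induces a $\mathbb{Q}$-algebra isomorphism. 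What the paper's approach buys, by contrast, is complete elementarity and explicitness: no algebraic number theory, everything checkable by matrix computation. The one place where your sketch needs firming up is the construction of the elements for composite $n$: passing to a power of a unit to land in $\Gamma(2)$ can in principle drop the degree (you flag this). A clean fix avoiding powers altogether: in a totally real degree-$n$ field, the units congruent to $1$ modulo $2\mathcal{O}_K$ form a finite-index subgroup of $\mathcal{O}_K^{*}$, which cannot be covered by the unit groups of the finitely many proper subfields (each of infinite index, by Dirichlet rank count and B.~H.~Neumann's covering lemma), so one can pick a generator $u\equiv 1 \pmod{2\mathcal{O}_K}$ directly, and multiplication by $u$ on $\mathcal{O}_K$ then gives the desired element of $\Gamma(2)$ with $\mathbb{Q}[M_u]\cong K$. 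With that adjustment your proof is complete.
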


It is worth noting here that the general structure of the conjugacy classes in $\Pi A(F_n)$ seems not so well-understood, and we are not aware of any results in this direction. The classification of conjugacy classes is often a first step to the classification of the $z$-classes. Accordingly, we ask this problem for $\Pi A(F_3)$. We use the same idea to actually ask for the conjugacy classes in $\widehat{GL_3(\mathbb Z)}$. We have not been able to provide a full solution for this problem. However, for certain subclasses of reducible palindromic automorphisms, that is, those whose $\psi$-images are reducible, we have determined their conjugacy classes; see Theorem \Ref{sim1}. It would be interesting to obtain a full understanding of the conjugacy classes and the $z$-classes of palindromic automorphisms and their $\psi$-images. 

\medskip 
The structure of the paper is as follows. In Section \ref{proof of theorem 1.1}, we prove the correspondence between $\Pi A(F_n)$ and $\widehat{GL_n(\mathbb Z)}$. Section~\ref{proof of theorem 1.2} is primarily devoted to the proof of Theorem~\ref{zclassn}, whereas Section~\ref{reducible palindromic} addresses the conjugacy classes of a certain family of reducible palindromic automorphisms.
 \subsection{Acknowledgment}
 Gongopadhyay acknowledges the research grants CRG/2022/003680 and DST/INT/JSPS/P-323/2020. Kundu acknowledges support from the grant CRG/2022/003680. Singh acknowledges support from the NBHM PDF number 0204/27/(29)/2023/R$\&$D-II/11930 and from the IISER Mohali appointment number IISER/23/Dean Faculty/2976.

We would like to thank the anonymous referee for pointing out mistakes in the proof of Theorem \ref{ncase}. We acknowledge the valuable comments of Mr. Rahul Mondal and express our gratitude to Dr. Sarbendu Rakshit and Dr. Pabitra Barman for their assistance during the computational experiments.

\section{Proof of the Theorem \ref{ncase}} \label{proof of theorem 1.1}

\medskip Let $F_n=\langle a_1,a_2,a_3,\dots,a_n\rangle$ and $\widehat{A} \in \widehat{GL_n(\mathbb{Z})}$.
We use the following short exact sequence from \cite{njf}:\begin{equation}\label{seqn}
    1\rightarrow \mathcal{PI}_n\rightarrow P \Pi A(F_n) \xrightarrow {\psi} \Gamma_n[2]\rightarrow 1,
\end{equation} where ${\mathcal{PI}}_n$ is the palindromic Torelli group, $P \Pi A(F_n)$ is the pure palindromic automorphism group generated by palindromic automorphisms $A_{ij}$ and $\sigma_{a_i}$, and  $$\Gamma_n[2]:=\{ A \in GL_n(\mathbb{Z})|~A \equiv I_n \pmod 2\},$$ 
where $I_n$ denotes $n \times n$ identity matrix, that is,  $\Gamma_n[2]$ is the principle congruence subgroup of level $2$, and $\psi$ is the restriction of the abelianization map described earlier. 

First, we claim that $\widehat{GL_n(\mathbb{Z})} \subseteq \psi(\Pi A(F_n))$. Suppose $\widehat{A} \in \widehat{GL_n(\mathbb{Z})}$. The proof of the claim is divided into the two following cases.

\begin{itemize}
    \item [Case 1.]\label{cs1} 
    If the diagonal entries of $\widehat{A}$ are odd and the non-diagonal entries are all even integers, then $\widehat{A}\in \Gamma_n[2]$. So via the above short exact sequence (see Equation (\ref{seqn})) there exists a palindromic automorphism $\alpha \in P\Pi A(F_n)$ such that $\psi(\alpha)=\widehat{A}$.
    
    \item[Case 2.] If $\widehat A$ does not satisfy the conditions of Case~1, 
then $\widehat A \bmod 2$ is a permutation matrix $P \in GL_n(\mathbb Z_2)$. 
Choose $\rho \in S_n$ whose permutation matrix equals $P^{-1}$. 
Then $\psi(\tau_\rho)\widehat A \equiv I_n \pmod 2$, so 
$\psi(\tau_\rho)\widehat A \in \Gamma_n[2]$. 
   
   Now, using the surjectivity of Equation (\ref{seqn}), we have a pure palindromic automorphism, say $\alpha'$, such that $\psi(\alpha')=\psi(\tau_{\rho})\widehat A$. We now proceed to compute $$\psi(\tau_{\rho^{-1}})\psi(\alpha')=\psi(\tau_{\rho^{-1}}) \psi(\tau_{\rho})\ \widehat A.$$
    Since $\psi$ is a homomorphism, we have $\psi(\tau_{\rho^{-1}} \circ \alpha')=\psi(\tau_{\rho^{-1}} \circ \tau_{\rho})\widehat A$, where $\circ$ denotes the composition of two palindromic automorphisms. Hence, we have a palindromic automorphism $\tau_{\rho^{-1}} \circ \alpha'$, say $\alpha$, satisfying $\psi(\alpha)=\widehat{A}.$
\end{itemize}
This proves our claim.

For the converse, let $\alpha \in \Pi A(F_n)$. Observe that the palindromic word $\alpha(a_i)$ cannot be of the form $w\Bar{w}$ for any $a_i$. Indeed, if $\alpha(a_i)=w\bar w$, then $\psi(\alpha) \notin GL_n(\mathbb{Z})$ since its determinant would not be $\pm 1$. Hence, for each $a_i$, the word $\alpha(a_i)$ must be a palindromic word of odd length, and therefore has the form $wa\bar{w}$, where $a \in \{a_1^{\pm 1}, a_2^{\pm 1},\dots,a_n^{\pm 1}\}$. Consequently, the exponent of $a$  is odd while the exponents of the remaining basis elements in $\alpha(a_i)$ are even. Since $\psi(\alpha)\in  GL_n(\mathbb{Z})$, this argument implies that $\psi(\alpha)\in \widehat{GL_n(\mathbb{Z})}$.\qed

\section{Proof of Theorem \thmref{zclassn}} \label{proof of theorem 1.2}

We begin this section with several elementary yet significant observations concerning the group $\Pi A(F_3)$. These observations enable the identification of infinitely many $z$-classes in the groups $\Pi A(F_n)$. Here, we begin by identifying the $z$-classes of the generators of $\Pi A(F_3)$. 

\begin{lemma}\label{case1}
For $1 \le i \neq j \le 3$, the abelianization matrices $\psi(A_{ij})$ are conjugate to one another in $\widehat{GL_3(\mathbb{Z})}$. Consequently, their centralizers in $\widehat{GL_3(\mathbb{Z})}$ are conjugate.
\end{lemma}

\begin{proof}
Recall that for $i \neq j$, the matrix $\psi(A_{ij})$ has the form
\[
\psi(A_{ij}) = I_3 + 2E_{ij},
\]
where $E_{ij}$ denotes the $3 \times 3$ matrix whose $(i,j)$-entry is $1$ and all other entries are $0$.  

Let $\sigma \in S_3$ be a permutation and $P_{\sigma}$ the corresponding permutation matrix, so that $P_{\sigma} e_k = e_{\sigma(k)}$ for the standard basis vectors $\{e_1,e_2,e_3\}$. Each permutation matrix lies in $\widehat{GL_3(\mathbb{Z})}$, hence conjugation by $P_{\sigma}$ is an inner automorphism of $\widehat{GL}_3(\mathbb{Z})$.

A straightforward computation gives
\[
P_{\sigma} E_{ij} P_{\sigma}^{-1} = E_{\sigma(i)\sigma(j)}.
\]
Therefore,
\[
P_{\sigma}\, \psi(A_{ij})\, P_{\sigma}^{-1}
  = P_{\sigma}(I_3 + 2E_{ij}) P_{\sigma}^{-1}
  = I_3 + 2 P_{\sigma}E_{ij}P_{\sigma}^{-1}
  = I_3 + 2E_{\sigma(i)\sigma(j)}
  = \psi(A_{\sigma(i)\sigma(j)}).
\]
Hence, for any $\sigma \in S_3$, $\psi(A_{\sigma(i)\sigma(j)})$ is conjugate to $\psi(A_{ij})$.  
In particular, all $\psi(A_{ij})$ (for $i \neq j$) lie in a single conjugacy class under permutation matrices.

Since conjugation preserves centralizers, we have
\[
Z(\psi(A_{\sigma(i)\sigma(j)})) = P_{\sigma} Z(\psi(A_{ij})) P_{\sigma}^{-1}.
\]
Thus, the centralizers of the matrices $\psi(A_{ij})$ are conjugate to one another in $\widehat{GL_3(\mathbb{Z})}$.
\end{proof}

\begin{theorem}{\label{z-classes}}
The $z$-classes of images of the generators of $\Pi A(F_3)$ under the map $\psi$ are as follows:
    \begin{enumerate}
    \item Centralizers of the elements $\psi(A_{ij})$ for $1\leq i\neq j \leq 3$ are conjugate to each other by $\psi (\tau_{\rho})$  in $\widehat{GL_3(\mathbb{Z})}$ for some $\rho \in S_3$.
    \item Centralizers of the elements $\psi(\sigma_{a_i})$  are also conjugate to each other by $\psi(\tau_{\rho})$  in $\widehat{GL_3(\mathbb{Z})}$ for some $\rho \in S_3$.
    \item Let $S_3=\langle(12),(132)\rangle$ be generated by a 2-cycle and a $3$-cycle. Centralizers of the images of the automorphisms corresponding to the generators of $S_3$ under $\psi$ are not conjugate in $\widehat{GL_3(\mathbb{Z})}$. 
\end{enumerate}
\end{theorem}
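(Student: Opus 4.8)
The plan is to record the $\psi$-images of the three families of generators and then exploit the way the permutation automorphisms act by conjugation. Write $P_\rho:=\psi(\tau_\rho)$ for the permutation matrix attached to $\rho\in S_3$. A direct computation gives $\psi(A_{ij})$ equal to the identity with a single $2$ in the $(i,j)$ entry, $\psi(\sigma_{a_i})$ equal to the diagonal matrix with $-1$ in the $i$-th slot and $1$ elsewhere, and in particular $P_{(12)}=\begin{bsmallmatrix}0&1&0\\1&0&0\\0&0&1\end{bsmallmatrix}$, $P_{(123)}=\begin{bsmallmatrix}0&1&0\\0&0&1\\1&0&0\end{bsmallmatrix}$. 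The key structural fact is that conjugation by $\tau_\rho$ only relabels indices: checking on basis elements yields $\tau_\rho A_{ij}\tau_\rho^{-1}=A_{\rho(i)\rho(j)}$ and $\tau_\rho\sigma_{a_i}\tau_\rho^{-1}=\sigma_{a_{\rho(i)}}$, and since $\psi$ is a homomorphism these pass to the matrix identities $P_\rho\,\psi(A_{ij})\,P_\rho^{-1}=\psi(A_{\rho(i)\rho(j)})$ and $P_\rho\,\psi(\sigma_{a_i})\,P_\rho^{-1}=\psi(\sigma_{a_{\rho(i)}})$.

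For parts (1) and (2) I would invoke transitivity. The group $S_3$ acts transitively on the ordered pairs $(i,j)$ with $i\neq j$ and on the singletons $\{i\}$, so given any two such pairs (resp.\ indices) there is a $\rho$ carrying one to the other; the corresponding $\psi$-images are then conjugate by $P_\rho\in\widehat{GL_3(\mathbb{Z})}$. Because conjugating an element conjugates its centralizer by the same matrix, i.e.\ $Z(gxg^{-1})=gZ(x)g^{-1}$, the centralizers of the $\psi(A_{ij})$ are mutually conjugate by suitable $P_\rho$, and likewise for the $\psi(\sigma_{a_i})$. These parts are routine once the relabeling identities are established.

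The substance of the theorem is (3), where I must show that $Z_{\widehat{GL_3(\mathbb{Z})}}(P_{(12)})$ and $Z_{\widehat{GL_3(\mathbb{Z})}}(P_{(123)})$ are not conjugate. Since conjugate subgroups are isomorphic, it suffices to exhibit a conjugation-invariant that separates them, and commutativity is the natural choice. The matrix $P_{(123)}$ has characteristic polynomial $x^3-1$ with three distinct eigenvalues, so every matrix commuting with it is a polynomial $aI+bP_{(123)}+cP_{(123)}^2$; reading off entries forces $a,b,c\in\mathbb{Z}$, whence $Z_{GL_3(\mathbb{Z})}(P_{(123)})$ consists of integer circulant matrices and is abelian, and intersecting with $\widehat{GL_3(\mathbb{Z})}$ preserves this. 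Solving $P_{(12)}M=MP_{(12)}$ instead shows the centralizer of the transposition consists of all integer matrices of the form $\begin{bsmallmatrix}\alpha&\beta&\gamma\\\beta&\alpha&\gamma\\\delta&\delta&\epsilon\end{bsmallmatrix}$, a five-parameter family whose real points realise a copy of $GL_2\times GL_1$ in the eigenbasis of $P_{(12)}$ and is therefore non-abelian.

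The final and most delicate step is to confirm that non-commutativity survives the parity constraint defining $\widehat{GL_3(\mathbb{Z})}$, namely that each row carries exactly one odd entry. I would settle this by displaying two explicit non-commuting elements of the centralizer lying in $\widehat{GL_3(\mathbb{Z})}$, for instance $M=\begin{bsmallmatrix}1&0&0\\0&1&0\\2&2&1\end{bsmallmatrix}$ and $N=\begin{bsmallmatrix}1&0&2\\0&1&2\\0&0&1\end{bsmallmatrix}$, for which $MN\neq NM$. This makes $Z_{\widehat{GL_3(\mathbb{Z})}}(P_{(12)})$ non-abelian while $Z_{\widehat{GL_3(\mathbb{Z})}}(P_{(123)})$ is abelian, so the two centralizers cannot be conjugate. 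The main obstacle is precisely this last point: the parity condition could a priori shrink the transposition centralizer to a commutative subgroup, so exhibiting concrete non-commuting integer matrices — rather than arguing only over $\mathbb{R}$ — is what makes the separation rigorous.
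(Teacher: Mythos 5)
Your proposal is correct, and parts (1) and (2) follow the paper's own route: the paper also conjugates the $\psi(A_{ij})$ (resp.\ $\psi(\sigma_{a_i})$) by permutation matrices $\psi(\tau_\rho)$ and then passes to centralizers, though it does this by explicit computation on sample pairs such as $A_{12},A_{31}$ rather than via your general relabeling identities $\tau_\rho A_{ij}\tau_\rho^{-1}=A_{\rho(i)\rho(j)}$ and the transitivity of the $S_3$-action, which is the cleaner formulation of the same idea. Where you genuinely diverge is part (3). The paper distinguishes the two centralizers by counting elements of order two: it computes both centralizers explicitly (obtaining the same matrix shapes you found), then solves the Diophantine system $a^2+2bc=1$, $c^2+2ab=0$, $b^2+2ac=0$ to show that $-I$ is the \emph{only} involution in $Z(\psi(\tau_{(123)}))$, while $Z(\psi(\tau_{(12)}))$ visibly contains several involutions such as $\mathrm{diag}(-1,-1,1)$ and $\mathrm{diag}(1,1,-1)$. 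You instead use commutativity as the separating invariant: the $3$-cycle has distinct eigenvalues, so its commutant is the polynomial algebra it generates (integer circulants), hence abelian, whereas the transposition's centralizer inside $\widehat{GL_3(\mathbb{Z})}$ is non-abelian, witnessed by your explicit pair $M,N$ (which I checked: both satisfy the symmetry pattern, both lie in $\widehat{GL_3(\mathbb{Z})}$, and $MN\neq NM$). Your route buys a conceptually cleaner argument that avoids solving quadratic Diophantine equations, and you correctly identify and close the one real pitfall of the abelianness invariant, namely that the parity condition could in principle cut the transposition's centralizer down to an abelian subgroup; the paper's route buys slightly more concrete information (an explicit census of involutions) at the cost of the number-theoretic case analysis. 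Both invariants are preserved by conjugation, so both proofs are complete.
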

 
\begin{proof} We look for the images of the generators of $\Pi A(F_3)$ in $\widehat{GL_3(\mathbb{Z})}$ and then search for matrices that conjugate the images. These matrices would also conjugate the centralizers of the images. We see that the matrices that are conjugating the images of the generators are the images of the palindromic automorphism corresponding to $\tau_{\rho}$ for some $\rho \in S_3$. 
    \begin{itemize}
    \item [Case 1:] This follows from \lemref{case1}.

   \item[Case 2:] The image of automorphism $\sigma_{a_i}$ for $1 \leq i \leq 3$  is $3\times3$ diagonal matrice in $\widehat{GL_3(\mathbb{Z})}$ whose $ii^{th}$ entry is $-1$ and $1$ at $jj^{th}$ places for $i\neq j$ and $1\leq i,j \leq 3$.
   Here, it is easy to see that  $\psi(\sigma_{a_i})$ and $\psi(\sigma_{a_j})$ for $1 \leq ~i\neq j ~\leq 3$ are conjugate to each other by $\psi(\tau_{(ij)})$  where $(ij) \in S_3$. Therefore, the generators $\sigma_{a_i}$ for $1 \leq i \leq 3$ are conjugate to each other and their centralizers too.

   \item[Case 3:] In this case, we study the $z$-classes of the automorphisms $\tau_{\rho}$ corresponding to the generators $\rho=$(12) and $\rho=$(132) of $S_3$. Let $\widehat{D}$ and $\widehat{E}$ be the matrices corresponding to $\tau_{(12)}$, $\tau_{(132)}$ in $\widehat{GL_3(\mathbb{Z})}$, respectively.  By definition of $\psi$, we have
   $\widehat{D}=\begin{bsmallmatrix}   
        0 & 1 & 0\\      
         1 & 0 & 0\\      
         0 & 0 & 1        
    \end{bsmallmatrix}, 
    \widehat{E}= \begin{bsmallmatrix}
    0 & 0 & 1\\
    1 & 0 & 0\\
    0 & 1 & 0
    \end{bsmallmatrix}.$

   Note that the matrices $\widehat{D}$ and $\widehat{E}$ are not conjugate to each other as their orders are different. Now, we show that the centralizer of the matrix $\widehat{D}$, $Z(\widehat{D})$, is not conjugate to the centralizer of the matrix $E$, $Z(\widehat{E})$. For this purpose, we first compute their centralizers. By computation, we get that $Z(\widehat{D}) \subseteq \widehat{GL_3(\mathbb{Z})}$ contains matrices of the form $\begin{bsmallmatrix}
       a & b & c\\
       b & a & c\\
       u & u & w
   \end{bsmallmatrix}$, and $Z(\widehat{E})\subseteq \widehat{GL_3(\mathbb Z)}$ contains matrices of the form $\begin{bsmallmatrix}
       a & b & c\\
       c & a & b\\
       b & c & a
   \end{bsmallmatrix}$. Now we try to figure out elements of order two in $Z(\widehat{E})$. Suppose
   \begin{center}
      $\begin{bsmallmatrix}
       a & b & c\\
       c & a & b\\
       b & c & a
   \end{bsmallmatrix}
   \begin{bsmallmatrix}
       a & b & c\\
       c & a & b\\
       b & c & a
   \end{bsmallmatrix}=
   \begin{bsmallmatrix}
       a^2+2bc & c^2+2ab & b^2+2ac\\
       b^2+2ac & a^2+2bc & c^2+2ab\\
       c^2+2ab & b^2+2ac & a^2+2bc
   \end{bsmallmatrix}=
   \begin{bsmallmatrix}
       1 & 0 & 0\\
       0 & 1 & 0\\
       0 & 0 & 1
   \end{bsmallmatrix}$
   \end{center}
   By solving the equations $a^2+2bc=1, c^2+2ab=0, b^2+2ac=0$, we get that  $a=\pm1, b=0, c=0$ are the only solutions in $\mathbb{Z}$. Therefore, $\begin{bsmallmatrix}
       -1 & 0 & 0\\
       0 & -1 & 0\\
       0 & 0 & -1
   \end{bsmallmatrix}$ 
   is the only element of order two in $Z(\widehat{E})$. The centralizer $Z(\widehat{D})$ has more than one element of order two. For example
      $\begin{bsmallmatrix}
       -1 & 0 & 0\\
       0 & -1 & 0\\
       0 & 0 & 1
   \end{bsmallmatrix} \text{ and }
   \begin{bsmallmatrix}
       1 & 0 & 0\\
       0 & 1 & 0\\
       0 & 0 & -1
   \end{bsmallmatrix}$. Therefore, the centralizers $Z(\widehat{E})$ and $Z(\widehat{D})$ can not be conjugated to each other.
\end{itemize}
This completes the proof. 
\end{proof}
Further, we observe in Corollary \ref{notsame} that the $z$-classes of $A_{ij}$ and $\sigma_{a_i}$, $1\leq i,j \leq 3$, are not same. We need the following Lemma \ref{infOr2} to prove it.  
\begin{lemma}\label{infOr2}
    The elements of $Z(\psi(A_{ij}))$, $1\leq i,j \leq 3$, are of order 2 or infinity in $\widehat{GL_3(\mathbb Z)}$.
\end{lemma}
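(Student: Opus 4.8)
The plan is to exploit the symmetry of the family $\{\psi(A_{ij})\}$ and then convert the order question into an eigenvalue computation. By Case~1 of \thmref{z-classes}, all the matrices $\psi(A_{ij})$ are conjugate to one another inside $\widehat{GL_3(\mathbb Z)}$ by suitable permutation matrices $\psi(\tau_\rho)$. Conjugation is an order-preserving group isomorphism carrying one centralizer onto another, so it suffices to treat a single representative, say $\psi(A_{12}) = I + 2E_{12}$, where $E_{12}$ is the matrix unit with a $1$ in the $(1,2)$ slot.

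First I would observe that a matrix $M$ commutes with $I + 2E_{12}$ precisely when it commutes with $E_{12}$, since the identity is central. Solving $ME_{12} = E_{12}M$ entrywise forces $m_{21}=m_{23}=m_{31}=0$ and $m_{11}=m_{22}$, so the centralizer consists exactly of the matrices
\[
M=\begin{pmatrix} a & b & c\\ 0 & a & 0\\ 0 & e & f\end{pmatrix}.
\]
Imposing $M\in GL_3(\mathbb Z)$ gives $\det M = a^2 f = \pm 1$, which forces $a,f\in\{\pm1\}$. Since the centralizer taken inside the subgroup $\widehat{GL_3(\mathbb Z)}$ is contained in the centralizer taken inside $GL_3(\mathbb Z)$, every element of $Z(\psi(A_{12}))$ has this shape.

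The key point is that the characteristic polynomial of such an $M$ factors as $(a-\lambda)^2(f-\lambda)$, so all eigenvalues of $M$ lie in $\{\pm1\}$. Now suppose $M$ has finite order $k$, so $M^k = I$. Then the minimal polynomial of $M$ divides $x^k - 1$, which is separable over $\mathbb C$; hence $M$ is diagonalizable, and being diagonalizable with eigenvalues only $\pm1$ it satisfies $M^2 = I$. Therefore $k \mid 2$, i.e.\ every torsion element of $Z(\psi(A_{12}))$ has order $1$ or $2$, while all other elements have infinite order. That infinite-order elements genuinely occur is witnessed by $\psi(A_{12})$ itself, whose powers are $I + 2kE_{12} \neq I$. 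Transporting back through the conjugations $\psi(\tau_\rho)$ then yields the statement for every $Z(\psi(A_{ij}))$ with $1\leq i\neq j\leq 3$.

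I expect the only delicate step to be the explicit determination of the centralizer together with the determinant constraint $a^2 f = \pm 1$; once the spectrum is pinned down to $\{\pm 1\}$, the order bound follows from the standard semisimplicity of finite-order matrices in characteristic zero. One should also verify that restricting the centralizer from $GL_3(\mathbb Z)$ to the proper subgroup $\widehat{GL_3(\mathbb Z)}$ introduces no new phenomena, which is immediate since the latter centralizer is a subset of the former and the eigenvalue argument applies uniformly.
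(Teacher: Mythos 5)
Your proof is correct, and it shares the paper's skeleton---reduction to $A_{12}$ via the conjugations $\psi(\tau_\rho)$ from Theorem~\ref{z-classes}, followed by the explicit determination of the centralizer as the matrices $\begin{bsmallmatrix} a&b&c\\0&a&0\\0&e&f\end{bsmallmatrix}$ with $a^2f=\pm1$---but it finishes by a genuinely different mechanism. The paper proceeds by direct computation: it splits into the sign cases $(a,f)=(1,1)$ and $(a,f)=(1,-1)$, writes down $\hat{B}^n$ (respectively $\hat{B}^{2n}$ and $\hat{B}^{2n+1}$) in closed form using the nilpotent part, and reads off that $\hat{B}^n=I$ forces $\hat{B}=I$ in the first case and $\hat{B}^2=I$ in the second. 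You instead observe that the characteristic polynomial is $(a-\lambda)^2(f-\lambda)$, so the spectrum lies in $\{\pm1\}$, and that a finite-order matrix in characteristic zero is diagonalizable because its minimal polynomial divides the separable polynomial $x^k-1$; hence every torsion element satisfies $M^2=I$. Your route is shorter, avoids both the case analysis and the power formulas, and generalizes verbatim to $Z(\psi(A_{ij}))$ inside $\widehat{GL_n(\mathbb{Z})}$ for arbitrary $n$; what it gives up is the explicit by-product of the paper's computation, namely a description of exactly which centralizer elements are involutions (in the paper's notation, those in the case $(a,j)=(1,-1)$ satisfying $2b+ch=0$). A minor point in your favor: you correctly record that the identity has order $1$, a case the lemma's wording and the paper's Case~1 gloss over.
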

\begin{proof}
Theorem \ref{z-classes} guarantees that it is enough to prove it for $A_{12}$. We know that \[ \psi(A_{12}) =\begin{bsmallmatrix}
    1&2&0\\0&1&0\\0&0&1
\end{bsmallmatrix}=\widehat{A}_{12} \text{ (say)}. \] Let \[\widehat{B}=\begin{bsmallmatrix}
    a&b&c\\d&e&f\\g&h&j
\end{bsmallmatrix} \in Z(\psi(A_{12})) \subseteq \widehat{GL_3(\mathbb{Z})} . \] Then  \[ \widehat{A}_{12} \widehat{B} = \widehat{B} \widehat{A}_{12} \implies \begin{bsmallmatrix}
    1&2&0\\0&1&0\\0&0&1
\end{bsmallmatrix} \begin{bsmallmatrix}
    a&b&c\\d&e&f\\g&h&j
\end{bsmallmatrix} = \begin{bsmallmatrix}
    a&b&c\\d&e&f\\g&h&j
\end{bsmallmatrix}  \begin{bsmallmatrix}
    1&2&0\\0&1&0\\0&0&1
\end{bsmallmatrix} \] \[ \implies \begin{bsmallmatrix}
    a + 2 d & b + 2 e & c + 2 f\\d&e&f\\g&h&j
\end{bsmallmatrix}= \begin{bsmallmatrix}
    a & 2 a + b & c \\ d & 2 d + e & f\\ g & 2 g + h & j
\end{bsmallmatrix}. \]   
Comparing the elements, one can easily see that $d=f=g=0,~ e=a$. Hence, $\widehat{B}$ is of the form \[ \widehat{B}=\begin{bsmallmatrix}
    a&b&c\\0&a&0\\0&h&j
\end{bsmallmatrix}. \] Clearly, $\det \widehat{B}=a^2j=\pm 1 \implies (a,j)\in \{(1,1),(1,-1),(-1,1),(-1,-1)\}$.
\begin{itemize}
    \item[Case 1:] When $(a,j)=(1,1)$,\[ \widehat{B}^n=\begin{bsmallmatrix}
    1& \frac{n(2b+ch(n-1))}{2} &nc\\0&1&0\\0&nh&1
\end{bsmallmatrix} \text{ for all } n\ge 1. \] So, $\widehat{B}^n=I \implies b=c=h=0 \implies \widehat{B}=I$. Hence, $\widehat{B}$ is of order infinity.
    \item[Case 2:]  When $(a,j)=(1,-1)$, \[ \widehat{B}^{2n}=\begin{bsmallmatrix}
    1&n(2b+ch)&0\\0&1&0\\0&0&1
\end{bsmallmatrix} \text{ and } \widehat{B}^{2n+1}= \begin{bsmallmatrix}
    1&(2n+1)b+nch&c\\0&1&0\\0&h&-1
\end{bsmallmatrix} \] for all $n\ge 1$. Clearly $\widehat{B}^{2n+1} \neq I$ for all $n\ge 1$. Also, if $\widehat{B}^{2n}=I$ for some $n>1$, then $2b+ch=0 \implies \widehat{B}^2=I$. Then the order of $\widehat{B}$ is 2.
\end{itemize}
 
 The cases $(a,j)=(-1,1)$ and $(a,j)=(-1,-1)$ reduce to Case 2 and Case 1, respectively. This concludes the proof.
\end{proof}
\begin{corollary}\label{notsame}
    The centralizers of $\psi(A_{ij})$ and $\psi(\sigma_{a_i})$, $1\leq i,j \leq 3$, are not conjugate to each other in $\widehat{GL_3(\mathbb{Z})}$.
\end{corollary}
\begin{proof}
    Theorem \ref{z-classes} guarantees that it is enough to prove it for $\psi(A_{12})$ and $\psi(\sigma_{a_1})$ only. We have, \[ \psi(\sigma_{a_1})=\begin{bsmallmatrix}
        -1&0&0\\0&1&0\\0&0&1
    \end{bsmallmatrix}= \widehat{A}_1 ~\text{ (say)}. \] Notice that,  \[ \widehat{B}= \begin{bsmallmatrix}
        1&0&0\\0&0&-1\\0&1&0
    \end{bsmallmatrix} \in \widehat{GL_3(\mathbb Z)}\] and $\widehat{A}_1\cdot \widehat{B} = \widehat{B}\cdot \widehat{A}_1$. So, $\widehat{B}\in Z(\psi(\sigma_{a_1}))$. It is easy to check that the order of $\widehat{B}$ is 4. But Lemma \ref{infOr2} 
 says that $Z(\psi(A_{12}))$ does not contain any element of order 4. So, $Z(\psi(A_{12})) \not\sim Z(\psi(\sigma_{a_1}))$. 
\end{proof}
   \begin{remark}
       In Theorem \ref{z-classes}, we consider the generators of the $\widehat{GL_3(\mathbb{Z})}$ to make computations easier. Observe that one can characterize the $z$-classes of the generators of $\widehat{GL_n(\mathbb{Z})}$ for all $n> 3$. From Theorem \ref{z-classes} and Corollary \ref{notsame}, it follows that the generators $A_{ij}$ and $\sigma_{a_i}$ belong to distinct $z$-classes.
   \end{remark}

   Utilizing the aforementioned observations, we first establish the claim of Theorem \ref{zclassn} in the specific instance where $n=3$. 
   
\begin{theorem}\label{P3}
    The number of $z$-classes of the palindromic automorphism group $\Pi A(F_3)$ is infinite.
\end{theorem}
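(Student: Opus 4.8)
The plan is to upgrade the finite list of generator z-classes into an infinite one. Since the generators of $\Pi A(F_3)$ produce only finitely many z-classes in $\widehat{GL_3(\mathbb{Z})}$ (Theorem~\ref{z-classes} and Proposition~\ref{notsame}), to obtain infinitely many z-classes in $\Pi A(F_3)$ I would exhibit an infinite family $\alpha_k \in \Pi A(F_3)$, $k \geq 1$, whose centralizers are pairwise non-conjugate. The device is a field-theoretic invariant of the z-class, extracted through the abelianization map $\psi$, which takes infinitely many values on the family.

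First I would isolate the invariant. Fix $\alpha \in \Pi A(F_3)$, put $M = \psi(\alpha) \in \widehat{GL_3(\mathbb{Z})}$, and suppose $M$ is regular, i.e. its minimal polynomial equals its characteristic polynomial; then the commutant of $M$ in the algebra $M_3(\mathbb{Q})$ is exactly $\mathbb{Q}[M]$. Because $\psi$ is a homomorphism, $\psi(Z_{\Pi A(F_3)}(\alpha)) \subseteq Z_{GL_3(\mathbb{Z})}(M)$, while trivially $M = \psi(\alpha) \in \psi(Z_{\Pi A(F_3)}(\alpha))$; hence the $\mathbb{Q}$-subalgebra of $M_3(\mathbb{Q})$ generated by $\psi(Z_{\Pi A(F_3)}(\alpha))$ is squeezed between $\mathbb{Q}[M]$ and the commutant, and therefore equals $\mathbb{Q}[M]$. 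Now if $\alpha \sim_z \beta$, then $Z_{\Pi A(F_3)}(\alpha) = g\, Z_{\Pi A(F_3)}(\beta)\, g^{-1}$ for some $g \in \Pi A(F_3)$, so applying $\psi$ gives $\psi(Z_{\Pi A(F_3)}(\alpha)) = \psi(g)\,\psi(Z_{\Pi A(F_3)}(\beta))\,\psi(g)^{-1}$; conjugation by $\psi(g) \in GL_3(\mathbb{Z})$ is a $\mathbb{Q}$-algebra automorphism of $M_3(\mathbb{Q})$, so it carries the algebra generated by one image subgroup onto that generated by the other, yielding $\mathbb{Q}[\psi(\alpha)] \cong \mathbb{Q}[\psi(\beta)]$. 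Thus the isomorphism type of $\mathbb{Q}[\psi(\alpha)]$ is constant on each z-class of $\Pi A(F_3)$.

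Next I would build the family using reducible images. I would take the block matrices $M_k = \begin{bsmallmatrix} 1 & 2k & 0 \\ 2 & 4k+1 & 0 \\ 0 & 0 & 1 \end{bsmallmatrix}$, whose upper $2\times 2$ block lies in $SL_2(\mathbb{Z})$ and is hyperbolic for $k \geq 1$. A parity check shows each row of $M_k$ has exactly two even entries, so $M_k \in \widehat{GL_3(\mathbb{Z})}$, and by Theorem~\ref{ncase} I may choose $\alpha_k \in \Pi A(F_3)$ with $\psi(\alpha_k) = M_k$. The eigenvalues of $M_k$ are $\lambda, \lambda^{-1}, 1$ with $\lambda$ a quadratic irrationality different from $\pm 1$, so the three are distinct, $M_k$ is regular, and $\mathbb{Q}[M_k] \cong \mathbb{Q}(\sqrt{k(k+1)}) \times \mathbb{Q}$ as a $\mathbb{Q}$-algebra. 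By the previous paragraph the real quadratic field $\mathbb{Q}(\sqrt{k(k+1)})$ is therefore a z-class invariant of $\alpha_k$. Taking $k = p$ prime, the prime $p$ divides the squarefree part of $p(p+1)$, so that squarefree part is at least $p$ and hence takes infinitely many values; consequently infinitely many pairwise non-isomorphic fields occur, so $\{\alpha_p\}$ meets infinitely many z-classes, proving Theorem~\ref{P3}. The case $\Pi A(F_n)$ then follows by padding $M_k$ with an identity block of matching parity.

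The crux, and the place where care is needed, is that $\psi$ has a large kernel, so $\psi(Z_{\Pi A(F_3)}(\alpha))$ is in general a proper subgroup of $Z_{GL_3(\mathbb{Z})}(\psi(\alpha))$ and centralizers are not \emph{a priori} detected faithfully in the image. What rescues the argument is that I never need the full centralizer downstairs: the two containments $M \in \psi(Z_{\Pi A(F_3)}(\alpha)) \subseteq Z_{GL_3(\mathbb{Z})}(M)$ already pin the generated algebra to $\mathbb{Q}[M]$, independently of the kernel. The two remaining verifications, that the chosen matrices lie in $\widehat{GL_3(\mathbb{Z})}$ and that infinitely many distinct fields actually arise, are exactly what the explicit family $M_k$ is engineered to guarantee, the latter via the elementary observation about squarefree parts of $p(p+1)$ rather than any deep arithmetic input.
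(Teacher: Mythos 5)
Your proof is correct, and it takes a genuinely different route from the paper's. The paper also works in $\widehat{GL_3(\mathbb{Z})}$, but with \emph{unipotent} elements: it takes the explicit families $\hat{A}_{n,l}$ and $\hat{B}_m$ (upper triangular with even entries above the diagonal), computes their centralizers by hand, and separates them by elementary means --- every element of $Z(\hat{A}_{n,l})$ has eigenvalues $\pm 1$, while $Z(\hat{B}_m)$ contains matrices with eigenvalues $2\pm\sqrt{3}$, and a system of linear equations rules out conjugation between the centralizers for distinct parameters. Your family is instead regular semisimple (hyperbolic $2\times 2$ block plus a fixed eigenvalue), and your separating invariant is algebraic rather than computational: the isomorphism type of $\mathbb{Q}[\psi(\alpha)]$, detected through the quadratic fields $\mathbb{Q}(\sqrt{p(p+1)})$. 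The real advantage of your version is the point you flag yourself: the descent from $z$-equivalence in $\Pi A(F_3)$ to data in the matrix group. The paper proves that $\widehat{GL_3(\mathbb{Z})}$ has infinitely many $z$-classes and then concludes ``hence the number of $z$-classes of $\Pi A(F_3)$ is infinite''; since $\psi(Z_{\Pi A(F_3)}(\alpha))$ may well be a proper subgroup of $Z_{\widehat{GL_3(\mathbb{Z})}}(\psi(\alpha))$, this last inference is precisely the non-trivial step, and the paper leaves it implicit. Your squeezing argument --- $M\in \psi(Z_{\Pi A(F_3)}(\alpha))\subseteq Z_{GL_3(\mathbb{Z})}(M)$, so for regular $M$ the $\mathbb{Q}$-algebra generated by the image of the centralizer is forced to be $\mathbb{Q}[M]$ --- supplies exactly that bridge, so on this point your write-up is more complete than the paper's own proof.

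One caveat, though it lies outside the statement you were asked to prove: your closing sentence, that $\Pi A(F_n)$ ``follows by padding $M_k$ with an identity block,'' does not go through verbatim. The padded matrix has eigenvalue $1$ with multiplicity $n-2$, hence is no longer regular, its commutant strictly contains $\mathbb{Q}[M]$, and your invariant is no longer pinned between the two containments. (The paper reduces Theorem~\ref{zclassn} to the $F_3$ case by a block-diagonal centralizer computation instead, which is also where additional care is needed.) For Theorem~\ref{P3} itself this is irrelevant, and your argument stands.
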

\begin{proof}

Consider the abelianization matrix 
\[
\widehat{A}_{n,l} =
\begin{bmatrix}
1 & 2n & 0\\
0 & 1 & 2l\\
0 & 0 & 1
\end{bmatrix}
\in \widehat{GL_3(\mathbb{Z})}.
\]
Suppose $\gcd(n,l)=1$. A simple computation shows that
\begin{equation}
Z(\widehat{A}_{n,l})=
\Bigg\{
\begin{bmatrix}
w & rn & c\\
0 & w & rl\\
0 & 0 & w
\end{bmatrix}
\in \psi(\Pi A(F_3))
\ \Big|\ 
r \in \mathbb{Z},\ 
w = \pm 1
\Bigg\}.
\end{equation}

Consider two pairs of nonzero positive integers $(n_1, l_1)$ and $(n_2, l_2)$ satisfying $\gcd(n_i, l_i)=1$ for $i=1,2$. 
Suppose the centralizers $Z(\widehat{A}_{n_1,l_1})$ and $Z(\widehat{A}_{n_2,l_2})$ belong to the same conjugacy class; that is, there exists
$P \in \widehat{GL_3(\mathbb{Z})}$ such that
\[
P\, Z(\widehat{A}_{n_1,l_1})\, P^{-1} = Z(\widehat{A}_{n_2,l_2}).
\]
If 
\[
X =
\begin{bmatrix}
w & r n_1 & c\\
0 & w & r l_1\\
0 & 0 & w
\end{bmatrix}
\in Z(\widehat{A}_{n_1,l_1}),
\]
then for some integers $q, w_1, d$, we have
\[
Y := PXP^{-1} =
\begin{bmatrix}
w_1 & q n_2 & d\\
0 & w_1 & q l_2\\
0 & 0 & w_1
\end{bmatrix}
\in Z(\widehat{A}_{n_2,l_2}).
\]

 Note that $w_1=w$ as conjugate matrices have the same eigenvalues. Let $P = [p_{ij}]_{1\leq i,j\leq 3}$. Computing $PX = YP$ shows that  $P$ is an upper triangular matrix with $p_{ii} = \pm 1$ for all $i$, since $P \in \widehat{GL_3(\mathbb{Z})}$. This computation also yields the following relations:
\begin{align}
p_{11} r n_1 &= p_{22} q n_2, \\
p_{22} r l_1 &= p_{33} q l_2, \\
p_{11} c + p_{12} r l_1 &= p_{23} q n_2 + p_{33} d.
\end{align}

Because $p_{ii} = \pm 1$ for all $i$, we obtain (from the above equations)
\[
q = \frac{\pm r n_1}{n_2} = \frac{\pm r l_1}{l_2}.
\]
If $(n_1, l_1)$ and $(n_2, l_2)$ are pairs of positive integers with $\gcd(n_i, l_i) = 1$, and for some $r \in \mathbb{Z}$ and $X \in Z(\widehat{A}_{n_1,l_1})$ we have
\[
\frac{r n_1}{n_2} \notin \mathbb{Z}
\quad \text{or} \quad
\frac{r l_1}{l_2} \notin \mathbb{Z},
\]
then $Z(\widehat{A}_{n_1,l_1})$ and $Z(\widehat{A}_{n_2,l_2})$ are not conjugate.

Moreover, there exist infinitely many such pairs. In particular, for $j \in \mathbb{N}$, define
\begin{equation}
    \label{condn}
    l_j = 1, \quad r\geq 2, \quad n_{j+1} = rn_j + 1, \quad n_1 = 2 \quad \forall~ j \in \mathbb{N}.
\end{equation}
Then the corresponding centralizers $Z(\widehat{A}_{n_j,l_j})$ are pairwise non-conjugate. This proves the theorem.
\end{proof}

The result obtained for the case $n=3$ extends naturally to the general case. Building on the preceding theorem, we now turn to the proof of Theorem~\ref{zclassn}.
\subsection*{3.1. Proof of Theorem 1.2}

Following the notions of the previous theorem, consider two block diagonal matrices 
$\widehat{A}_k$ and $\widehat{B}_j \in \widehat{GL_n(\mathbb{Z})}$ of the following forms:
\[
\widehat{A}_k =
\begin{bmatrix}
\widehat{A}_{n_k,l_k} & 0 \\
0 & -I_{n-3}
\end{bmatrix},
\qquad
\widehat{B}_j =
\begin{bmatrix}
\widehat{A}_{n_j,l_j} & 0 \\
0 & -I_{n-3}
\end{bmatrix},
\]
where $I_{n-3}$ denotes the $(n-3) \times (n-3)$ identity matrix. 

The eigen values of the matrices $\widehat{A}_{n_k,l_k}$ and $-I_{n-3}$ are $1$ and $-1$ (with multiplicity), respectively. Since the eigenvalues of block diagonal matrices are distinct, the centralizer of $\widehat{A}_k$ has the following form:
\begin{equation*}
\label{eq:centralizers}
\begin{aligned}
Z(\widehat{A}_k) &= 
\left\{
\begin{bmatrix}
\widehat{X} & 0 \\
0 & \widehat{Y}
\end{bmatrix}
\;\middle|\;
\widehat{X} \in Z(\widehat{A}_{n_k,l_k}), \widehat{Y} \in \widehat{GL_{n-3}(\mathbb{Z})}
\right\}. \\[6pt]
\end{aligned}
\end{equation*}
Similarly, we have
\begin{equation*}
\begin{aligned}
Z(\widehat{B}_j) &= 
\left\{
\begin{bmatrix}
\widehat{U} & 0 \\
0 & \widehat{V}
\end{bmatrix}
\;\middle|\;
\widehat{U} \in Z(\widehat{A}_{n_j,l_j}), \widehat{V} \in \widehat{GL_{n-3}(\mathbb{Z})}
\right\}.
\end{aligned}
\end{equation*}

An element  
\[
\begin{bmatrix}
\widehat{X} & 0 \\
0 & \widehat{Y}
\end{bmatrix}
\in Z(\widehat{A}_k)
\quad \text{being conjugate to an element} \quad
\begin{bmatrix}
\widehat{U} & 0 \\
0 & \widehat{V}
\end{bmatrix}
\in Z(\widehat{B}_j)
\]
necessarily implies that $\widehat{X}$ is conjugate to $\widehat{U}$. 
From the preceding theorem, it follows that there exist infinitely many pairs $(n_k, l_k)$ and $(n_j, l_j)$ (in particular, satisfying the condition Equation \eqnref{condn}) for which  
\[
Z(\widehat{A}_{n_k, l_k}) \text{ is not conjugate to } Z(\widehat{A}_{n_j, l_j})
\quad \text{in} \quad \widehat{GL_3(\mathbb{Z})}.
\]
Consequently, there are infinitely many indices $k$ and $j$ such that the corresponding centralizers 
$Z(\widehat{A}_k)$ and $Z(\widehat{B}_j)$ are not conjugate in $\widehat{GL_n(\mathbb{Z})}$. 
This completes the proof.

\section{Conjugacy of  $\psi$-images of reducible palindromic automorphisms of  $F_3$}\label{reducible palindromic}
A matrix \(A \in GL_3(\mathbb{Z})\) is termed reducible, cf. \cite{Bogopolski}, if there exists a permutation matrix \(P \in GL_3(\mathbb{Z})\) such that \(P^{-1}AP\) is a block diagonal matrix of the form
\begin{equation}\label{reduceform}
       P^{-1}AP=\begin{bsmallmatrix}
           X & Y\\ 0 & Z
       \end{bsmallmatrix}.
   \end{equation} 
   A palindromic automorphism is called reducible if its $\psi$-image is reducible. 
\begin{lemma}
\label{rp1}
    A palindromic automorphism matrix 
    $\widehat{A}=\begin{bsmallmatrix}
        a & b & c\\ d & e & f\\ g & h & l
            \end{bsmallmatrix}\in \widehat{GL_3(\mathbb{Z})} $ is reducible if and only if any of the following is true
            \begin{itemize}
                \item $b~=~c~=~0$.
                \item $d~=~f~=~0$.
                \item $g~=~h~=~0$.
                \item $d~=~g~=~0$.
                \item $b~=~h~=~0$.
                \item $f~=~c~=~0$.
            \end{itemize}
    
\end{lemma}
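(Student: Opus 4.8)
The plan is to translate the definition of reducibility into a statement about which coordinate subspaces of $\mathbb{Z}^3$ are preserved by $\hat A$, and then read off the six conditions directly. The starting observation is that conjugating by a permutation matrix $P$ attached to some $\sigma \in S_3$ only relabels the standard basis: one has $(P^{-1}\hat A P)_{ij} = \hat A_{\sigma(i)\sigma(j)}$, so bringing $\hat A$ to the block upper-triangular form $\begin{bsmallmatrix} X & Y \\ 0 & Z \end{bsmallmatrix}$ of the definition is exactly the same as producing a coordinate subspace $-$ one spanned by a subset of $\{e_1,e_2,e_3\}$ $-$ that is $\hat A$-invariant. I would note at the outset that the $\widehat{GL_3(\mathbb{Z})}$-membership (the palindromic parity condition) plays no role here: reducibility is governed entirely by the off-diagonal zero pattern, so the argument is purely linear-algebraic and works for any $3\times 3$ matrix.

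Next I would separate the block form into its two possible shapes and match each to an invariant coordinate subspace. A $1{+}2$ split (top-left block $X$ of size $1\times 1$) corresponds to an invariant coordinate line $\mathrm{span}(e_i)$, which holds precisely when $\hat A e_i$ stays on its own axis, i.e. when column $i$ has both off-diagonal entries zero. A $2{+}1$ split (top-left block $X$ of size $2\times 2$) corresponds to an invariant coordinate plane $\mathrm{span}(e_i,e_j)$, which holds precisely when the $e_k$-components of $\hat A e_i$ and $\hat A e_j$ vanish (where $k$ is the third index), i.e. when the complementary row $k$ has both off-diagonal entries zero.

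I would then enumerate the three columns and the three rows. The three columns with vanishing off-diagonal entries give $d=g=0$, $b=h=0$, and $f=c=0$; the three rows with vanishing off-diagonal entries give $b=c=0$, $d=f=0$, and $g=h=0$. These are exactly the six listed conditions. This settles the forward direction, since any permutation placing $\hat A$ in the block form of the definition forces one of these zero patterns; and it settles the converse, since each zero pattern exhibits an invariant line or plane, and listing that axis (or those two axes) first via the appropriate permutation $\sigma$ produces the required upper block-triangular form explicitly.

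The argument is essentially a bookkeeping translation, so I do not anticipate a genuine obstacle. The one point demanding care is keeping the row/column dichotomy straight: an invariant \emph{line} corresponds to a zero \emph{column} (a basis vector whose image stays on its axis), whereas an invariant \emph{plane} corresponds to a zero \emph{row} (the single component transverse to the plane vanishing for both spanning images). Getting this correspondence right is precisely what guarantees that all six cases $-$ and no spurious extra ones $-$ appear.
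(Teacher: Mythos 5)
Your proposal is correct and rests on the same underlying mechanism as the paper's proof: conjugation by a permutation matrix merely relabels the standard basis, so reducibility is a statement about the zero pattern of rows and columns. The paper executes this by explicitly conjugating with $P_{12}$, reading off $g=h=0$ (for the $2\times 2$ top block) or $b=h=0$ (for the $1\times 1$ top block), and dismissing the remaining permutations with a ``similarly''; your reformulation via invariant coordinate lines (columns with vanishing off-diagonal entries) and invariant coordinate planes (rows with vanishing off-diagonal entries) treats all six cases uniformly and in effect supplies the enumeration the paper leaves implicit, including the correct line/column versus plane/row dichotomy.
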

\begin{proof}
   
   Consider a permutation matrix $P_{12}=\begin{bsmallmatrix}
        0 & 1 & 0\\ 1 & 0 & 0\\ 0 & 0 & 1
            \end{bsmallmatrix}$. Then, $P_{12}^{-1}\widehat{A}P_{12}$ is a block diagonal matrix of the above form if and only if $g=h=0$ (in the case when $X$ is a $2\times 2$ matrix as in Equation \eqnref{reduceform}) or $b=h=0$ (in the case when $X$ is a $1\times 1$ matrix). Similarly, other conditions follow from choosing other permutation matrices in $GL_3(\mathbb{Z})$.   
\end{proof}

The following result is well-known. We shall give a proof for completeness. 
    \begin{proposition}
    \label{eg}
        Let $A \in GL_3(\mathbb{Z})$. Then $ +1$ or $-1$ is always an eigenvalue of $A$.
    \end{proposition}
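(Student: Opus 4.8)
The plan is to exploit the constraint that $A \in GL_3(\mathbb{Z})$ forces $\det A = \pm 1$, together with the fact that the characteristic polynomial has integer coefficients and degree $3$, so it must have at least one real root. First I would write the characteristic polynomial as $\chi_A(\lambda) = \det(\lambda I - A) = \lambda^3 - (\operatorname{tr} A)\lambda^2 + c_1 \lambda - \det A$, where all coefficients are integers and the constant term is $-\det A = \mp 1$. The key observation is that $\chi_A(1) = 1 - \operatorname{tr} A + c_1 - \det A$ and $\chi_A(-1) = -1 - \operatorname{tr} A - c_1 - \det A$ are both integers whose values I can control via parity.

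The cleanest route uses the rational root theorem: any rational root of the monic integer polynomial $\chi_A$ must be an integer dividing the constant term $-\det A = \pm 1$, hence the only candidate rational eigenvalues are $+1$ and $-1$. So it suffices to show at least one of these is actually a root. I would argue by a parity/product argument. Let $\lambda_1, \lambda_2, \lambda_3$ be the three (complex) eigenvalues. Their product is $\det A = \pm 1$. Since the coefficients are real, the non-real roots come in conjugate pairs; thus either all three eigenvalues are real, or exactly one is real. In the latter case the single real eigenvalue $\lambda$ satisfies $\lambda \cdot |z|^2 = \pm 1$ for the conjugate pair $z, \bar z$, and since $\lambda$ is a real root of a monic integer cubic it is forced by the rational root theorem to be $\pm 1$ once one checks it is rational — but a single real root of an integer cubic need not be rational, so this case needs the sharper argument below.

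The robust argument I would actually carry out is to evaluate $\chi_A(1)\cdot \chi_A(-1)$ and show it is even, in fact to show directly that $\chi_A(1)$ and $\chi_A(-1)$ cannot both be nonzero odd numbers of the same nonvanishing type; more precisely I would show $\chi_A(1) = 0$ or $\chi_A(-1) = 0$ by examining the two cases $\det A = 1$ and $\det A = -1$ separately and using that $\operatorname{tr} A$ and $c_1$ are integers. Concretely, writing $s = \operatorname{tr} A$ and $p = c_1$, one has $\chi_A(1) = 1 - s + p - \det A$ and $\chi_A(-1) = -(1 + s + p + \det A)$, so $\chi_A(1) - \chi_A(-1) = 2 - 2s + 2p$ is even and $\chi_A(1) + \chi_A(-1) = -2s - 2\det A$ is even, which shows $\chi_A(1)$ and $\chi_A(-1)$ have the same parity. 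I would then combine this with the product $\chi_A(1)\chi_A(-1) = (1-\det A - (s-p))(-(1+\det A + (s+p)))$ and the constraint $\det A = \pm 1$ to force one of the two values to vanish.

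I expect the main obstacle to be making the final parity step airtight rather than merely suggestive: having the same parity does not by itself force a root, so the decisive input must be that $\det A = \pm 1$ makes one of the two evaluations land exactly on zero. The clean way to close this is to recall that a monic integer polynomial reduced modulo $2$ equals $\lambda^3 + s\lambda^2 + p\lambda + 1 \pmod 2$ (since $\det A$ is odd), and over $\mathbb{F}_2$ this cubic always has a root because every element of $\mathbb{F}_2$ is $\pm 1$ and one checks $\chi_A(1) \equiv 0$ whenever the number of odd coefficients is even. I would therefore organize the proof around reducing $\chi_A$ modulo $2$, observing $\chi_A(1) \equiv 1 + s + p + 1 = s + p \pmod 2$, so the real difficulty reduces to the short casework on the parities of $s$ and $p$, and I would verify that in the case $s + p$ is odd one instead gets $\chi_A(-1) \equiv 0 \pmod 2$, after which the rational root theorem upgrades the mod-$2$ root to an honest integer eigenvalue $\pm 1$.
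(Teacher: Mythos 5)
Your instincts here are better than your execution: you correctly spotted the exact trap in this problem, namely that a monic integer cubic always has a real root but that root need not be rational, so the rational root theorem alone cannot conclude it is $\pm 1$. (This is precisely the error in the paper's own proof, which asserts that irrational roots ``occur in a pair''; that is false for cubics, whose irrational real roots need not pair up with anything.) However, your proposed repair via reduction modulo $2$ fails at both of its steps. First, a cubic over $\mathbb{F}_2$ need not have a root: $\lambda^3+\lambda+1$ and $\lambda^3+\lambda^2+1$ are irreducible over $\mathbb{F}_2$, so your claim that ``over $\mathbb{F}_2$ this cubic always has a root'' is wrong. Second, even when $\chi_A(\pm 1)\equiv 0 \pmod 2$, this only says the integer $\chi_A(\pm 1)$ is even, not that it is zero; there is no mechanism by which the rational root theorem ``upgrades'' a root mod $2$ to an integer eigenvalue. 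Parity information can never force $\chi_A(1)\chi_A(-1)=0$.

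No repair is possible, because the proposition as stated is false. Take the companion matrix
\[
A \;=\; \begin{pmatrix} 0 & 0 & 1 \\ 1 & 0 & 1 \\ 0 & 1 & 0 \end{pmatrix} \in GL_3(\mathbb{Z}),
\]
whose characteristic polynomial is $\chi_A(\lambda)=\lambda^3-\lambda-1$ and whose determinant is $1$. Here $\chi_A(1)=-1$ and $\chi_A(-1)=-1$, so neither $+1$ nor $-1$ is an eigenvalue; the unique real eigenvalue is irrational ($\approx 1.3247$), and indeed $\chi_A \equiv \lambda^3+\lambda+1 \pmod 2$ is irreducible, illustrating the first gap above. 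What is true, and what the paper actually uses afterwards, is the statement for \emph{reducible} matrices: if $P^{-1}\hat{A}P$ is block triangular with a $1\times 1$ diagonal block $e$, then $e\cdot\det Z=\pm 1$ with $e,\det Z\in\mathbb{Z}$ forces $e=\pm 1$, so $\pm 1$ is an eigenvalue. The proposition should be restated with that reducibility hypothesis; for general elements of $GL_3(\mathbb{Z})$, both your argument and the paper's are unsalvageable.
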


\begin{proof}
    Let $\chi_A(x)$ and $m_A(x)$ be the characteristic and minimal polynomial of $A$, respectively. The degree of the polynomial $\chi_A(x)$ is three (an odd integer). As irrational numbers and complex numbers occur in pairs as roots of $\chi_A(x)$. So, the polynomial $\chi_A(x)$ always has a rational root.  It follows from \textbf{rational root theorem} that the rational root is either $+1$ or $-1$ because the characteristic polynomial $\chi_A(x)$ of $A$ is monic and constant terms are equal to $ +1$ or $-1$.  
\end{proof}

The following remark is an easy consequence of Lemma \ref{rp1} and Proposition \ref{eg}. We use this information to study reducible palindromic automorphisms.

\begin{remark} A reducible palindromic automorphism matrix $\widehat{A}~ \in ~ \widehat{GL_3(\mathbb{Z})}$ is, up to conjugation by a permutation matrix, of one of the following forms: $\begin{bsmallmatrix}
        \pm 1 & 2r & 2s\\ 0 & a & b\\ 0 & c & d
    \end{bsmallmatrix} \text{ or }
    \begin{bsmallmatrix}
      a & b & 2r \\  c & d & 2s \\0 & 0 & \pm 1
    \end{bsmallmatrix}, $ where $ad-bc=\pm 1$.
\end{remark}

 \medskip   \begin{theorem}
   \label{sim1}
       Let $\widehat{A},~\widehat{B}~\in ~\widehat{GL_3(\mathbb{Z})}$ correspond to two reducible palindromic automorphism $A,~B$ respectively. Suppose $\widehat A$ and $\widehat B$ are of the forms  $\begin{bsmallmatrix}
        e & 2r & 2s\\ 0 & a & b\\ 0 & c & d
    \end{bsmallmatrix}$ and $\begin{bsmallmatrix}
        e & 0 & 0\\ 0 & a & b\\ 0 & c & d
    \end{bsmallmatrix}$  respectively, where  $e~=~\pm 1$. Let $g(t)~=~t^2-\tau t+ \delta$ be the characteristic polynomial of $A_2=\begin{bsmallmatrix}
         a & b\\  c & d
    \end{bsmallmatrix}$, and $\delta ~=~\pm 1$. Let $m~=~ e\tau-1-\delta$ and $A_0=A_2-(\tau - e)I_2.$ Now $\widehat{A},~\widehat{B}$ are conjugate in $\widehat{GL_3(\mathbb{Z})}$ if and only if 
   $\begin{bsmallmatrix}
        r & s
    \end{bsmallmatrix} A_0 ~\equiv~ 0~(mod~ m)$ and $|\tau| ~ \neq~2$.
    \end{theorem}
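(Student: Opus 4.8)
The plan is to turn the conjugacy problem into a single matrix identity and exploit the shared block shape of $\hat A$ and $\hat B$. I would seek a conjugator $\hat P=\left[\begin{smallmatrix} p & q\\ w & M\end{smallmatrix}\right]\in\widehat{GL_3(\mathbb Z)}$, with $p$ a scalar, $q$ a $1\times2$ row, $w$ a $2\times1$ column and $M$ a $2\times2$ block, and expand $\hat A\hat P=\hat P\hat B$ into the four block identities
\[
vw=0,\qquad q(A_2-eI)=vM,\qquad A_2w=ew,\qquad A_2M=MA_2,
\]
where $v=\begin{bmatrix}2r & 2s\end{bmatrix}=2\bar v$. The first key observation is that the characteristic polynomial gives $\det(A_2-eI)=g(e)=-m$, so the hypothesis $|\tau|\neq 2$ is precisely the nondegeneracy $m\neq0$ (equivalently $A_2-eI$ invertible): indeed in the principal case $\delta=1$ one has $m=\tau e-2$, which vanishes exactly when $|\tau|=2$. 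Invertibility of $A_2-eI$ then forces $w=0$ from the third identity, collapsing $\hat P$ to block-upper-triangular form.

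With $w=0$, the determinant condition $\det\hat P=p\det M=\pm1$ forces $p=\pm1$ and $M\in GL_2(\mathbb Z)$, while the fourth identity says $M$ commutes with $A_2$. The middle identity then pins down $q$: using $\operatorname{adj}(N)=(\operatorname{tr}N)I-N$ for a $2\times2$ matrix, a direct computation gives $\operatorname{adj}(A_2-eI)=-A_0$, hence $(A_2-eI)^{-1}=A_0/m$ and, since $M$ commutes with the polynomial $A_0$ in $A_2$,
\[
q=vM(A_2-eI)^{-1}=\tfrac{2}{m}\,\bar v A_0 M .
\]
Thus every candidate conjugator is determined by the choice of a commuting unit $M$, and the whole question reduces to an integrality/parity statement about this single row $q$.

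Next I would translate membership $\hat P\in\widehat{GL_3(\mathbb Z)}$ into parity data. Since $p$ is odd, the top row forces $q$ to be \emph{even}; since the first entry of each lower row is the even number $0$, the bottom two rows force each row of $M$ to contain exactly one even entry, i.e. $M\equiv I$ or $M\equiv\left[\begin{smallmatrix}0&1\\1&0\end{smallmatrix}\right]\pmod 2$. For \textbf{sufficiency} I would simply take $M=I$: then $q=\tfrac2m\bar vA_0$ is an even integer vector precisely when $\tfrac1m\bar vA_0\in\mathbb Z^{2}$, i.e. when $\bar vA_0\equiv 0\pmod m$, and $\hat P=\left[\begin{smallmatrix}1&q\\0&I\end{smallmatrix}\right]$ is then a genuine element of $\widehat{GL_3(\mathbb Z)}$ conjugating $\hat B$ to $\hat A$. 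For \textbf{necessity}, any conjugator yields an even integral $q=\tfrac2m\bar vA_0M$, so $\tfrac1m\bar vA_0M$ is integral; multiplying by $M^{-1}\in GL_2(\mathbb Z)$ recovers $\bar vA_0\equiv 0\pmod m$. This closes the equivalence on the arithmetic side.

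The step I expect to be the main obstacle is the role of $|\tau|\neq 2$, that is, the degenerate case $m=0$. When $A_2$ has a repeated eigenvalue equal to $e$, the matrix $A_2-eI$ is singular, the third identity no longer forces $w=0$, and the mod-$m$ criterion becomes vacuous; conjugacy is then governed by the (unipotent-type) Jordan structure of $\hat A$ and $\hat B$ rather than by the congruence. Isolating this case and verifying that it must be excluded—so that $|\tau|\neq 2$ is genuinely part of the characterization—is the delicate point, and I would handle it by a separate rank/Jordan analysis of $A_2-eI$, disjoint from the generic argument above. A secondary bookkeeping hurdle is checking that the admissible parity patterns of $M$ (the identity or the swap, modulo $2$) are compatible with $M$ lying in the commutant of $A_2$, so that allowing general $M$ produces no conjugacies beyond those already detected by the choice $M=I$.
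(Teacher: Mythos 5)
Your proposal is correct where the paper's own argument is correct, and it rests on the same algebraic core: both reduce the conjugacy equation to block identities and both hinge on the relation $(A_2-eI_2)A_0=mI_2$ (your adjugate formula $\operatorname{adj}(A_2-eI_2)=-A_0$ is exactly this fact). The differences are in rigor, and they favor you. In the necessity direction the paper simply \emph{assumes} that the conjugator has the shape $\begin{bsmallmatrix} e & 2p & 2q\\ 0 & x & y\\ 0 & z & w \end{bsmallmatrix}$, i.e.\ that its lower-left column vanishes and its $(1,1)$ entry is $e$; you instead start from an arbitrary element of $\widehat{GL_3(\mathbb{Z})}$, deduce $w=0$ from $A_2w=ew$ together with the invertibility of $A_2-eI_2$, deduce $p=\pm1$ from the determinant, and deduce evenness of $q$ from the parity condition defining $\widehat{GL_3(\mathbb{Z})}$ — this closes a genuine gap in the paper's proof. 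You also make explicit where the hypothesis $|\tau|\neq 2$ enters: the paper's converse cancels $A_0$ from $-eRA_0=P(A_2-eI_2)A_0$ without comment, which is legitimate only because $\det A_0=-m\neq 0$. Two caveats, which apply to the paper as much as to you: first, $m\neq 0$ and $|\tau|\neq 2$ are not literally equivalent — for $\delta=1$ one has $m=e\tau-2$, which vanishes only when $\tau=2e$, while for $\delta=-1$ one has $m=e\tau$, which vanishes when $\tau=0$ — so the clean hypothesis is $m\neq 0$, as your argument in effect uses; second, neither you nor the paper establishes anything in the degenerate case $m=0$ (where, e.g., $r=s=0$ gives $\hat A=\hat B$, conjugate even though $|\tau|$ may equal $2$), so the ``only if'' claim involving $|\tau|\neq 2$ is not actually proved by either argument — you at least flag this explicitly as requiring a separate Jordan-type analysis, which the paper omits entirely.
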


\begin{proof}
If $\widehat{A}$ and $\widehat{B}$ are conjugate in $GL_3(\mathbb{Z})$ then the conjugating matrix is of the form $\widehat{R}~=~\begin{bsmallmatrix}
        u & 2p & 2q \\ 0 & x & y \\ 0 & z & w
    \end{bsmallmatrix}$ $(see ~pp. 166$ in \cite{Onishi}). Since we are working within $\widehat{GL_3(\mathbb{Z})}$, so we assume that $\widehat{R} \in \widehat{GL_3(\mathbb{Z})}$ and that \begin{equation}
        \label{rab}\widehat{R}\widehat{A}=\widehat{B}\widehat{R}.
    \end{equation}
    
    Let $X_2~=~\begin{bsmallmatrix}
        x & y \\ z & w
    \end{bsmallmatrix}$. Then, from Equation \eqnref{rab}, we obtain 
    \begin{equation}
    \label{eq:X2relation}
      \begin{split}
          u \begin{bsmallmatrix}
            2r & 2s 
        \end{bsmallmatrix}+ \begin{bsmallmatrix}
            2p & 2q
        \end{bsmallmatrix}A_2= e \begin{bsmallmatrix}
            2p & 2q
        \end{bsmallmatrix} \text{ and },  A_2X_2=X_2A_2.
      \end{split}  
    \end{equation} From the first relation in Equation \eqref{eq:X2relation}, we have \begin{equation}
      \begin{split}
          u \begin{bsmallmatrix}
            2r & 2s 
        \end{bsmallmatrix}+ \begin{bsmallmatrix}
            2p & 2q
        \end{bsmallmatrix}A_2= e \begin{bsmallmatrix}
            2p & 2q
        \end{bsmallmatrix} \\ \Rightarrow \begin{bsmallmatrix}
            2p & 2q
        \end{bsmallmatrix}(A_2-e.I_2)= - u \begin{bsmallmatrix}
            2r & 2s 
        \end{bsmallmatrix}.
      \end{split}  
    \end{equation}
   Thus      
    \begin{equation}
      \begin{split}
          -u \begin{bsmallmatrix}
            2r & 2s 
        \end{bsmallmatrix} A_0~=~ \begin{bsmallmatrix}
            2p & 2q
        \end{bsmallmatrix}(A_2-e.I_2) A_0.
      \end{split}  
    \end{equation}
      Since $(A_2-e~I_2)A_0~=~m~I_2$, we will have \begin{equation}
          \begin{split}
               -u \begin{bsmallmatrix}
            2r & 2s 
        \end{bsmallmatrix} A_0~=~m  \begin{bsmallmatrix}
            2p & 2q
        \end{bsmallmatrix} I_2\\ \Rightarrow -2u \begin{bsmallmatrix}
            r & s 
        \end{bsmallmatrix} A_0~=~ 2m \begin{bsmallmatrix}
            p & q
        \end{bsmallmatrix} I_2\\ \Rightarrow u \begin{bsmallmatrix}
            r & s 
        \end{bsmallmatrix} A_0 ~\equiv ~0~(mod~m).
          \end{split}
      \end{equation}   
    As $u=\pm 1$, so $\begin{bsmallmatrix}
            r & s 
        \end{bsmallmatrix} A_0 ~\equiv ~0~(mod~m).$ \\
         The converse statement is established by retracing the preceding argument in reverse order, proceeding from the final step back to the initial one.\\
        Assume that $\begin{bsmallmatrix}
        r & s
    \end{bsmallmatrix}A_0~\equiv ~0~(mod~m)$. This implies that $\exists~p,~q~\in ~\mathbb{Z}$ such that $-u \begin{bsmallmatrix}
            2r & 2s 
        \end{bsmallmatrix} A_0~=~m  \begin{bsmallmatrix}
            2p & 2q
        \end{bsmallmatrix} I_2$. Define $\widehat{R}~=~ \begin{bsmallmatrix}
            u & 2p & 2q \\ 0 & a & b \\ 0 & c & d        \end{bsmallmatrix}.$ To prove that $\widehat{R}\widehat{A}~=~\widehat{B}\widehat{R}$ we need to show that \begin{equation}
            \label{pf}
                uR+PA_2~=~eP \text{ where } R~=~\begin{bsmallmatrix}
                2r & 2s
            \end{bsmallmatrix}, \text{ and }  P~=~\begin{bsmallmatrix}
                2p & 2q
            \end{bsmallmatrix}.
            \end{equation}
   Now we have $\begin{bsmallmatrix}
       r & s
   \end{bsmallmatrix} A_0~\equiv~ 0~(mod~m).$ Hence \begin{equation*}
       \begin{split}
           u \begin{bsmallmatrix}
       r & s
   \end{bsmallmatrix} A_0~\equiv~ 0~(mod~m)\\ \Rightarrow -2u \begin{bsmallmatrix}
       r & s
   \end{bsmallmatrix}A_0~=~2m \begin{bsmallmatrix}
       p & q
   \end{bsmallmatrix} I_2\\ \Rightarrow -u R A_0~=~m P I_2\\ \Rightarrow -u R A_0~=~ P(A_2-eI_2) A_0\\ \Rightarrow P (A_2-eI_2)~=~ -uR\\ \Rightarrow uR~+~P A_2~=~ eP .
       \end{split}
   \end{equation*} 
   As Equation (\ref{pf}) is satisfied, so $\widehat{R}$ is the required matrix such that $\widehat{R}\widehat{A}~=~\widehat{B}\widehat{R}$ and 
   Consequently, $\widehat{R}$ corresponds to a palindromic automorphism of $F_3$.

\end{proof}
\begin{corollary}
    If the matrices $\begin{bsmallmatrix}
        e & 2r & 2s \\ 0 & a & b\\ 0 & c & d
    \end{bsmallmatrix}$ and $\begin{bsmallmatrix}
        e & 0 & 0 \\ 0 & a & b\\ 0 & c & d
    \end{bsmallmatrix}$ satisfy the condition of Theorem \ref{sim1}, then their centralizers are conjugate.  
\end{corollary}
\begin{theorem}\label{sim2}
    Let $\widehat{A},~\widehat{B}~\in ~ GL_3(\mathbb{Z})$ corresponding to two reducible palindromic automorphisms $A~,B$ respectively and of the form $\begin{bsmallmatrix}
        a & b & 2r \\ c & d & 2s \\ 0 & 0 & e
    \end{bsmallmatrix}$ and $\begin{bsmallmatrix}
        a & b & 0 \\ c & d & 0 \\ 0 & 0 & e
    \end{bsmallmatrix}$ respectively and $e~=~\pm 1$. Let $g(t)~=~t^2-\tau  t+ \delta$ be the characteristic polynomial of $A_2~=~\begin{bsmallmatrix}
        a & b \\ c & d
    \end{bsmallmatrix}$ and $\delta~=~\pm 1$.  Let $m~=~ e\tau-1-\delta$ and $A_0=A_2-(\tau - e)I_2.$ Now $\widehat{A},~\widehat{B}$ are conjugate if and only if $\begin{bsmallmatrix}
        r & s
    \end{bsmallmatrix} A_0 ~\equiv~ 0~(mod~ m)$ and $|\tau|~\neq 2$.
\end{theorem}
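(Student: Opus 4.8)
The statement is the transpose-analogue of Theorem~\ref{sim1}: the invariant block $A_2$ now occupies the upper-left corner and the even perturbation sits in the last column. The plan is to mirror the proof of Theorem~\ref{sim1} almost verbatim. Writing $\hat A=\begin{bsmallmatrix} A_2 & \mathbf c_A\\ 0 & e\end{bsmallmatrix}$ and $\hat B=\begin{bsmallmatrix} A_2 & 0\\ 0 & e\end{bsmallmatrix}$ with $\mathbf c_A=\begin{bsmallmatrix} 2r\\ 2s\end{bsmallmatrix}$, I would look for a conjugating matrix of the block form $\hat R=\begin{bsmallmatrix} X_2 & \mathbf c_R\\ 0 & e\end{bsmallmatrix}\in\widehat{GL_3(\mathbb Z)}$ with $\mathbf c_R=\begin{bsmallmatrix} 2p\\ 2q\end{bsmallmatrix}$. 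Expanding $\hat R\hat A=\hat B\hat R$ and comparing blocks splits the relation into the commuting condition $X_2A_2=A_2X_2$ from the upper-left $2\times2$ block, together with the perturbation equation $(A_2-eI_2)\mathbf c_R=X_2\mathbf c_A$ from the last column.

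The algebraic engine is the one used in Theorem~\ref{sim1}: by Cayley--Hamilton $A_2^2-\tau A_2+\delta I_2=0$, and since $e^2=1$ one checks $(A_2-eI_2)A_0=mI_2$ and $\det A_0=-m$. Left-multiplying the perturbation equation by $A_0$ and using that $A_0$ is a polynomial in $A_2$, hence commutes with $X_2$, yields $m\,\mathbf c_R=X_2A_0\mathbf c_A$. Because $\det\hat R=\pm1$ forces $X_2\in GL_2(\mathbb Z)$, and because both $\mathbf c_A$ and $\mathbf c_R$ are even so the factors of $2$ cancel, I arrive at the congruence $A_0\begin{bsmallmatrix} r\\ s\end{bsmallmatrix}\equiv 0\pmod m$, which is the column form of the condition in the statement. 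The hypothesis $|\tau|\neq2$ is exactly what guarantees $m\neq0$ when $\delta=1$, so that the congruence is meaningful and $A_0$ is nonsingular over $\mathbb Q$; this plays precisely the role it does in Theorem~\ref{sim1}.

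For the converse I would reverse these steps: given the congruence, choose integers $p,q$ with $m\begin{bsmallmatrix} p\\ q\end{bsmallmatrix}=A_2A_0\begin{bsmallmatrix} r\\ s\end{bsmallmatrix}$ (these exist since $A_0\begin{bsmallmatrix} r\\ s\end{bsmallmatrix}\equiv0\pmod m$ and $A_2$ is integral), set $X_2=A_2$, and form $\hat R=\begin{bsmallmatrix} a & b & 2p\\ c & d & 2q\\ 0 & 0 & e\end{bsmallmatrix}$. Using $(A_2-eI_2)A_0=mI_2$ together with commutativity, a short computation confirms $(A_2-eI_2)\mathbf c_R=A_2\mathbf c_A$, hence $\hat R\hat A=\hat B\hat R$. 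To see that $\hat R$ is genuinely the $\psi$-image of a palindromic automorphism, I would invoke Theorem~\ref{ncase}: the first two rows of $\hat R$ are the rows of $\hat B$ with the final $0$ replaced by the even entries $2p,2q$, so each row still has exactly two even entries, and the last row is $(0,0,e)$; equivalently, the reduction of $\hat R$ modulo $2$ is a permutation matrix. Since $\det\hat R=e\delta=\pm1$, this places $\hat R$ in $\widehat{GL_3(\mathbb Z)}$.

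The main obstacle is the one already latent in Theorem~\ref{sim1}: a priori a conjugacy in $\widehat{GL_3(\mathbb Z)}$ need not be realized by a block-triangular $\hat R$ of the assumed shape, and one must argue that the common invariant subspace $\langle e_1,e_2\rangle$ permits reduction to this form; one must also keep careful track of the orientation (row versus column) of the emergent congruence. A conceptually cleaner route that handles both points at once is to use the involution $\Phi(\hat M)=J\hat M^{\mathsf T}J$, where $J$ is the order-reversing permutation matrix. Since a matrix lies in $\widehat{GL_n(\mathbb Z)}$ exactly when its reduction modulo $2$ is a permutation matrix, and permutation matrices are closed under transpose and under conjugation by $J$, the map $\Phi$ is a conjugacy-class-preserving involution of $\widehat{GL_3(\mathbb Z)}$. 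A direct computation shows $\Phi$ carries the matrices of this theorem to matrices of the shape treated in Theorem~\ref{sim1}, with the same trace $\tau$, determinant $\delta$, and modulus $m$; thus Theorem~\ref{sim1} applies to $\Phi(\hat A)$ and $\Phi(\hat B)$, and it remains only to transport its congruence back through $\Phi$ to recover the condition asserted here.
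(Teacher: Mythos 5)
Your block computation is, in substance, the paper's own argument: the paper's entire proof of Theorem~\ref{sim2} is the single sentence that it is ``similar to Theorem~\ref{sim1}'', and your mirroring of that proof --- the commuting condition $X_2A_2=A_2X_2$, the perturbation equation $(A_2-eI_2)\mathbf{c}_R=X_2\mathbf{c}_A$, the Cayley--Hamilton identity $(A_2-eI_2)A_0=mI_2$, the converse with $X_2=A_2$, and the membership check via Theorem~\ref{ncase} --- is correct, including the caveat (present equally in the paper's proof of Theorem~\ref{sim1}) that the forward direction assumes the conjugator is block-triangular; you at least flag this, the paper does not. Your involution $\Phi(\hat M)=J\hat M^{\mathsf T}J$ is a genuinely different and cleaner route that the paper does not take: since membership in $\widehat{GL_3(\mathbb Z)}$ means the reduction modulo $2$ is a permutation matrix, a class closed under transpose and under conjugation by $J$, the map $\Phi$ is an anti-automorphism of $\widehat{GL_3(\mathbb Z)}$ carrying conjugacy classes to conjugacy classes, and it sends the matrices of Theorem~\ref{sim2} to matrices of the shape of Theorem~\ref{sim1} with the same $\tau$, $\delta$, $m$; this makes ``similar'' precise and imports Theorem~\ref{sim1} as a black box.

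One point, however, you should state as a \emph{correction} of the theorem rather than gloss as ``the column form of the condition in the statement''. What your derivation produces (and what the $\Phi$-reduction to Theorem~\ref{sim1} produces as well) is the congruence $A_0\begin{bsmallmatrix} r\\ s\end{bsmallmatrix}\equiv 0 \pmod m$, and this is \emph{not} equivalent to the printed condition $\begin{bsmallmatrix} r& s\end{bsmallmatrix}A_0\equiv 0\pmod m$: the two differ by replacing $A_0$ with $A_0^{\mathsf T}$, i.e.\ by swapping $b$ and $c$. For example, take $e=1$ and $A_2=\begin{bsmallmatrix}1 & 2\\ 4 & 9\end{bsmallmatrix}$, so $\tau=10$, $\delta=1$, $m=8$, $A_0=\begin{bsmallmatrix}-8 & 2\\ 4 & 0\end{bsmallmatrix}$. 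The printed row condition reads $r\equiv 0\pmod 4$ and $s\equiv 0\pmod 2$, while the column condition reads $r\equiv 0\pmod 2$ and $s\equiv 0\pmod 4$. With $(r,s)=(4,2)$ the printed condition holds, yet $\hat A$ and $\hat B$ are not conjugate: applying Theorem~\ref{sim1} to $\Phi(\hat A)$, $\Phi(\hat B)$ (whose perturbation row is $\begin{bsmallmatrix}4&8\end{bsmallmatrix}$ and whose $A_0'=\begin{bsmallmatrix}0&2\\4&-8\end{bsmallmatrix}$) gives $\begin{bsmallmatrix}2&4\end{bsmallmatrix}A_0'=\begin{bsmallmatrix}16&-28\end{bsmallmatrix}\not\equiv 0\pmod 8$. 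So the paper's statement of Theorem~\ref{sim2} carries the congruence over from Theorem~\ref{sim1} without transposing it; your proof establishes the corrected statement, and you should say so explicitly.
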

\begin{proof}
    The proof is similar to the proof of Theorem $\ref{sim1}$.
\end{proof}
\begin{corollary}
    If the matrices $\begin{bsmallmatrix}
        a & b & 2r \\ c & d & 2s \\ 0 & 0 & e
    \end{bsmallmatrix}$ and $\begin{bsmallmatrix}
        a & b & 0 \\ c & d & 0 \\ 0 & 0 & e
    \end{bsmallmatrix}$ satisfy the condition of Theorem \ref{sim2}, then their centralizers are conjugate.
\end{corollary}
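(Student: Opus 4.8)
The plan is to deduce the statement directly from Theorem \ref{sim2} together with the elementary fact that conjugate elements have conjugate centralizers. Write $G = \widehat{GL_3(\mathbb{Z})}$ and let $\hat{A} = \begin{bsmallmatrix} a & b & 2r \\ c & d & 2s \\ 0 & 0 & e \end{bsmallmatrix}$ and $\hat{B} = \begin{bsmallmatrix} a & b & 0 \\ c & d & 0 \\ 0 & 0 & e \end{bsmallmatrix}$ be the two matrices in the statement, assumed to satisfy $\begin{bsmallmatrix} r & s \end{bsmallmatrix} A_0 \equiv 0 \pmod{m}$ and $|\tau| \neq 2$. By Theorem \ref{sim2}, these hypotheses guarantee that $\hat{A}$ and $\hat{B}$ are conjugate in $G$; moreover, the proof of that theorem produces an explicit conjugator $\hat{R} \in G$, corresponding to a palindromic automorphism of $F_3$, with $\hat{R}\hat{A} = \hat{B}\hat{R}$, that is, $\hat{B} = \hat{R}\hat{A}\hat{R}^{-1}$. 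The first step is simply to record this $\hat{R}$.

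The second step is the general group-theoretic identity $Z_G(g x g^{-1}) = g\, Z_G(x)\, g^{-1}$, valid for any group $G$ and any $x, g \in G$. This is immediate: for $h \in G$ one has $h(gxg^{-1}) = (gxg^{-1})h$ if and only if $(g^{-1}hg)x = x(g^{-1}hg)$, so $h \in Z_G(gxg^{-1})$ if and only if $g^{-1}hg \in Z_G(x)$, which is exactly the condition $h \in g\, Z_G(x)\, g^{-1}$. Applying this identity with $g = \hat{R}$ and $x = \hat{A}$ yields $Z_G(\hat{B}) = \hat{R}\, Z_G(\hat{A})\, \hat{R}^{-1}$, so the two centralizers are conjugate in $G$, which is precisely the assertion that they are equivalent (i.e. $z$-equivalent).

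The only point requiring care, and the reason this corollary is not a mere restatement of Theorem \ref{sim2}, is that equivalence of centralizers is defined through conjugation \emph{inside} the ambient group $\widehat{GL_3(\mathbb{Z})}$, not inside the larger group $GL_3(\mathbb{Z})$. Thus the essential input from Theorem \ref{sim2} is not only that $\hat{A}$ and $\hat{B}$ are similar as integer matrices, but that the conjugating matrix $\hat{R}$ itself lies in $\widehat{GL_3(\mathbb{Z})}$, that is, is the $\psi$-image of a palindromic automorphism. I would flag this explicitly by pointing to the construction of $\hat{R}$ in the proof of Theorem \ref{sim2}, where $\hat{R}$ is verified to have the palindromic form. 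Granting that, the conjugation above takes place entirely within $G$, and no further obstacle remains; the corollary follows in two lines.
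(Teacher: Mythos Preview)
Your proposal is correct and matches the paper's intent: the corollary is stated there without proof, as an immediate consequence of Theorem~\ref{sim2} together with the elementary fact that conjugate elements have conjugate centralizers (a fact the authors even record separately as a lemma). Your care in noting that the conjugator $\hat{R}$ must lie in $\widehat{GL_3(\mathbb{Z})}$, not merely in $GL_3(\mathbb{Z})$, is exactly the right emphasis.
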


We conclude this section by presenting an example of two reducible palindromic automorphisms that are not conjugate in $\widehat{GL_3(\mathbb{Z})}$, since they do not fulfill the necessary condition derived in Theorem \ref{sim1}.
\begin{example}
    Let $\widehat{A}=\begin{bsmallmatrix}
        1 & 2 & 2 \\ 0 & 3 & 4 \\ 0 & 2 & 3
    \end{bsmallmatrix}$ and $\widehat{B}=\begin{bsmallmatrix}
        1 & 0 & 0 \\ 0 & 3 & 4 \\ 0 & 2 & 3
    \end{bsmallmatrix}$ be two reducible palindromic automorphism matrices. If possible suppose there exists a matrix $\widehat{R} \in \widehat{GL_3(\mathbb{Z})}$ such $\widehat{R}\widehat{A}\widehat{R}^{-1}=\widehat{B}$ (see \cite{Onishi}). Assume
    \begin{center}
      $\widehat{R}=\begin{bmatrix}
        a_{11} & a_{12} & a_{13} \\ 0 & a_{22} & a_{23} \\ 0 & a_{32} & a_{33}
    \end{bmatrix}.$  
    \end{center}
    Computing both sides of the equation $\widehat{R}\widehat{A}=\widehat{B}\widehat{R}$, and comparing the corresponding entries, we obtain the following equations:

    \begin{equation}
        \label{eq1}
        2a_{11}+2a_{12}+2a_{13}=0
    \end{equation}
    \begin{equation}
       \label{eq2}2a_{11}+4a_{12}+2a_{13}=0 
    \end{equation}
    
  By solving the system \eqref{eq1}–\eqref{eq2}, we obtain $a_{12}=0$ and $a_{11}=-a_{13}$. Consequently, the first row of $\widehat{R}$ consists entirely of even entries when $a_{11}$ is even, and contains one even and two odd entries when $a_{11}$ is odd. In either case, $\widehat{R}$ does not correspond to a palindromic automorphism, as follows from Theorem~\ref{ncase}. Moreover, the matrices $\widehat{A}$ and $\widehat{B}$ do not satisfy the condition stated in Theorem~\ref{sim1}.\qed
\end{example}

%
%

\end{document}